\documentclass[12pt]{amsart}
\usepackage{amscd,amsmath,amsthm,amssymb}
\usepackage{amsfonts,amssymb,amscd,amsmath,enumerate,verbatim}
\usepackage[left]{lineno}
\usepackage{pstricks, pst-plot,pst-3d}

\newpsstyle{fatline}{linewidth=1.5pt}
\newpsstyle{fyp}{fillstyle=solid,fillcolor=verylight}
\definecolor{verylight}{gray}{0.97}
\definecolor{light}{gray}{0.9}
\definecolor{medium}{gray}{0.85}
\definecolor{dark}{gray}{0.6}
%
%
%
\def\NZQ{\mathbb}               

\def\ZZ{{\NZQ Z}}
\def\RR{{\NZQ R}}

%
%

%

\def\Dc{{\mathcal D}}
\def\Pc{{\mathcal P}}
\def\Qc{{\mathcal Q}}
\def\Mc{{\mathcal M}}
\def\Hc{{\mathcal H}}

\def\Bc{{\mathcal B}}

%
\def\ab{{\mathbf a}}

\def\eb{{\mathbf e}}

\def\opn#1#2{\def#1{\operatorname{#2}}} 
%
\opn\chara{char} \opn\length{\ell} \opn\pd{pd} \opn\rk{rk}
\opn\projdim{proj\,dim} \opn\injdim{inj\,dim} \opn\rank{rank}
\opn\depth{depth} \opn\grade{grade} \opn\height{height}
\opn\embdim{emb\,dim} \opn\codim{codim}
\opn\Cl{Cl}

\opn\Tr{Tr} \opn\bigrank{big\,rank}
\opn\superheight{superheight}\opn\lcm{lcm}
\opn\trdeg{tr\,deg}
	\opn\reg{reg} \opn\lreg{lreg} \opn\ini{in} \opn\lpd{lpd}
	\opn\size{size} \opn\sdepth{sdepth}
	\opn\link{link}\opn\fdepth{fdepth}\opn\lex{lex}
	\opn\tr{tr}
	\opn\type{type}
	\opn\gap{gap}
	\opn\arithdeg{arith-deg}
	\opn\revlex{revlex}
	%
	\opn\div{div} \opn\Div{Div} \opn\cl{cl} \opn\Cl{Cl}
	%
	%
	\opn\Spec{Spec} \opn\Supp{Supp} \opn\supp{supp} \opn\Sing{Sing}
	\opn\Ass{Ass} \opn\Min{Min}\opn\Mon{Mon}
	%
	%
	\opn\Ann{Ann} \opn\Rad{Rad} \opn\Soc{Soc}
	%
	%
	\opn\Im{Im} \opn\Ker{Ker} \opn\Coker{Coker} \opn\Am{Am}
	\opn\Hom{Hom} \opn\Tor{Tor} \opn\Ext{Ext} \opn\End{End}
	\opn\Aut{Aut} \opn\id{id}
	
	\opn\nat{nat}
	\opn\pff{pf}
	\opn\Pf{Pf} \opn\GL{GL} \opn\SL{SL} \opn\mod{mod} \opn\ord{ord}
	\opn\Gin{Gin} \opn\Hilb{Hilb}\opn\sort{sort}
	\opn\PF{PF}\opn\Ap{Ap}
	\opn\mult{mult}
	\opn\bight{bight}
	\opn\div{div}
	\opn\Div{Div}
	%
	%
	\opn\aff{aff}
	\opn\relint{relint} \opn\st{st}
	\opn\lk{lk} \opn\cn{cn} \opn\core{core} \opn\vol{vol}  \opn\inp{inp} \opn\nilpot{nilpot}
	\opn\link{link} \opn\star{star}\opn\lex{lex}\opn\set{set}
	\opn\width{wd}
	\opn\Fr{F}
	\opn\QF{QF}
	\opn\G{G}
	\opn\type{type}\opn\res{res}
	\opn\conv{conv}
	\opn\Deg{Deg}
	\opn\Sym{Sym}
	\opn\Con{Con}
	\opn\gr{gr}
	
	%
	%
	
	\def\pot#1#2{#1[\kern-0.28ex[#2]\kern-0.28ex]}

	%
	%
	\opn\dirlim{\underrightarrow{\lim}}
	\opn\inivlim{\underleftarrow{\lim}}
	%
	%
	%

	%
	%
	\let\to=\rightarrow
	
	\def\Implies{\ifmmode\Longrightarrow \else
		\unskip${}\Longrightarrow{}$\ignorespaces\fi}
	\def\implies{\ifmmode\Rightarrow \else
		\unskip${}\Rightarrow{}$\ignorespaces\fi}
	\def\iff{\ifmmode\Longleftrightarrow \else
		\unskip${}\Longleftrightarrow{}$\ignorespaces\fi}

	\let\:=\colon
	\newtheorem{Theorem}{Theorem}[section]
	\newtheorem{Lemma}[Theorem]{Lemma}
	\newtheorem{Corollary}[Theorem]{Corollary}

	\newtheorem{Example}[Theorem]{Example}
	
	\newtheorem{Definition}[Theorem]{Definition}
	
	\newtheorem{Conjecture}[Theorem]{Conjecture}
	
	\newtheorem{Question}[Theorem]{Question}
	%
	\let\epsilon\varepsilon
	\let\kappa=\varkappa
	%
	%
	\textwidth=15cm \textheight=22cm \topmargin=0.5cm
	\oddsidemargin=0.5cm \evensidemargin=0.5cm 
	%
	%
	\def\qed{\ifhmode\textqed\fi
		\ifmmode\ifinner\quad\qedsymbol\else\dispqed\fi\fi}
	\def\textqed{\unskip\nobreak\penalty50
		\hskip2em\hbox{}\nobreak\hfil\qedsymbol
		\parfillskip=0pt \finalhyphendemerits=0}
	\def\dispqed{\rlap{\qquad\qedsymbol}}
	
	%
	\opn\dis{dis}
	\def\pnt{{\raise0.5mm\hbox{\large\bf.}}}
	
	\opn\Lex{Lex}

	


\begin{document}
\title[Pseudo-Gorenstein and level polytopes]{Bounded powers of edge ideals: Pseudo-Gorenstein and Level polytopes}

\author[T.~Hibi]{Takayuki Hibi}
\author[S.~A.~ Seyed Fakhari]{Seyed Amin Seyed Fakhari}

\address{(Takayuki Hibi) Department of Pure and Applied Mathematics, Graduate School of Information Science and Technology, Osaka University, Suita, Osaka 565--0871, Japan}
\email{hibi@math.sci.osaka-u.ac.jp}
\address{(Seyed Amin Seyed Fakhari) Departamento de Matem\'aticas, Universidad de los Andes, Bogot\'a, Colombia}
\email{s.seyedfakhari@uniandes.edu.co}

\subjclass[2020]{52B20, 13H10}

\keywords{Discrete polymatroid, level* polytope, pseudo-Gorenstein* polytope, unimodal sequence}

\begin{abstract}
A lattice polytope $\Pc \subset \RR^n$ of dimension $n$ is called level* if (i) $\Pc$ is normal, (ii) $(\Pc \setminus \partial \Pc) \cap \ZZ^n \neq \emptyset$ and (iii) for each $N = 2,3, \ldots$ and for each $\ab \in N(\Pc \setminus \partial \Pc) \cap \ZZ^n$, there is $\ab_0 \in (\Pc \setminus \partial \Pc) \cap \ZZ^n$ together with $\ab' \in (N-1)\Pc \cap \ZZ^n$ for which $\ab = \ab_0 + \ab'$, where $N\Pc = \{N\ab : \ab \in \Pc\}$.  A normal polytope $\Pc \subset \RR^n$ of dimension $n$ is called {\em pseudo-Gorenstein*} \cite{pseudo} if 
$
|(\Pc \setminus \partial \Pc) \cap \ZZ^n| = 1.
$
A pseudo-Gorenstein* polytope $\Pc$ is level* if and only if $\Pc$ is reflexive up to translation.  In the present paper, level* polytopes together with pseudo-Gorenstein* polytopes arising from discrete polymatroids of bounded powers of edge ideals are studied.      
\end{abstract}	
\maketitle
\thispagestyle{empty}

\section*{Introduction}
A convex polytope $\Pc \subset \RR^n$ of dimension $n$ is called a {\em lattice polytope} if each of whose vertices belongs to $\ZZ^n$.  We say that a lattice polytope $\Pc \subset \RR^n$ of dimension $n$ is {\em normal} if, for each $N = 2,3, \ldots$ and for each $\ab \in N \Pc \cap \ZZ^n$, there are $\ab^{(1)}, \ldots, \ab^{(N)}$ belonging to $\Pc \cap \ZZ^n$ for which $\ab = \ab^{(1)} + \cdots + \ab^{(N)}$, where $N\Pc = \{N\ab : \ab \in \Pc\}$.  

A normal polytope $\Pc \subset \RR^n$ of dimension $n$ with $(\Pc \setminus \partial \Pc) \cap \ZZ^n \neq \emptyset$ is called {\em level*} if, for each $N = 2,3, \ldots$ and for each $\ab \in N(\Pc \setminus \partial \Pc) \cap \ZZ^n$, there exist $\ab_0 \in (\Pc \setminus \partial \Pc) \cap \ZZ^n$ and $\ab' \in (N-1)\Pc \cap \ZZ^n$ for which $\ab = \ab_0 + \ab'$, where $\Pc \setminus \partial \Pc$ is the interior of $\Pc$.  Every reflexive polytope \cite{HH_reflexive} is level*.  

On the other hand, a normal polytope $\Pc \subset \RR^n$ of dimension $n$ is called {\em pseudo-Gorenstein*} \cite{pseudo} if 
$
|(\Pc \setminus \partial \Pc) \cap \ZZ^n| = 1.
$
A pseudo-Gorenstein* polytope $\Pc$ is level* if and only if $\Pc$ is reflexive up to translation.

The terminology {\em level} comes from {\em level rings} introduced by Stanley \cite{Stanley} in the frame of combinatorics.  Every Gorenstein ring is level.  However, a level* polytope is a generalization of a reflexive polytope.  A Gorenstein polytope $\Pc \subset \RR^n$ \cite{HSF_5} is level* if and only if $(\Pc \setminus \partial \Pc) \cap \ZZ^n \neq \emptyset$.  One reason why level* polytopes are of interest is explained in Appendix.  The condition (ii) $(\Pc \setminus \partial \Pc) \cap \ZZ^n \neq \emptyset$ is required to conclude the unimodality (\ref{unimodal}) of $\delta$-vectors.

Let $S=K[x_1, \ldots,x_n]$ denote the polynomial ring  in $n$ variables over a field $K$ with $n \geq 3$.  If $u \in S$ is a monomial, then $M_{\leq u}$ stands for the set of those monomials $w \in S$ which divide $u$.  In particular, $1 \in M_{\leq u}$ and $u \in M_{\leq u}$.  Let $G$ be a finite graph on the vertex set $V(G)=\{x_1, \ldots, x_n\}$, where $n \geq 2$, with no loop, no multiple edge and no isolated vertex, and $E(G)$ the set of edges of $G$.  The edge ideal of $G$ is the ideal $I(G) \subset S$ which is generated by those $x_ix_j$ with $\{x_i, x_j\} \in E(G)$. 

Let $\ZZ_{>0}$ denote the set of positive integers. Given $\mathfrak{c}=(c_1,\ldots,c_n)\in (\ZZ_{>0})^n$ and an integer $q\geq 1$, we denote by $(I(G)^q)_\mathfrak{c}$ the ideal of $S$ generated by those monomials $x_1^{a_1}\cdots x_n^{a_n} \in I(G)^q$ with each $a_i \leq c_i$.  Let $\delta_{\mathfrak{c}}(I(G))$ denote the biggest integer $q$ for which $(I(G)^q)_\mathfrak{c} \neq (0)$.  Then $(I(G)^{\delta_{\mathfrak{c}}(I(G))})_\mathfrak{c}$ is a polymatroidal ideal \cite[Theorem 4.3]{HSF_1}.  

Let $\Bc(G,\mathfrak{c})$ denote the minimal set of monomial generators of $(I(G)^{\delta_{\mathfrak{c}}(I(G))})_\mathfrak{c}$.  Set $\Mc(G,\mathfrak{c}):= \{ M_{\leq u} : u \in \Bc(G,\mathfrak{c})\}$ and
\[
\Dc(G,\mathfrak{c}):=\{(a_1, \ldots, a_n) \in \ZZ^n : x_1^{a_1}\cdots x_n^{a_n} \in \Mc(G,\mathfrak{c})\}.
\]
The standard unit coordinate vectors $\eb_1, \ldots, \eb_n$ of $\RR^n$ together with the origin $(0,\ldots,0)$ of $\RR^n$ belong to $\Dc(G,\mathfrak{c})$.  Now, since $(I(G)^{\delta_{\mathfrak{c}}(I(G))})_\mathfrak{c}$ is a polymatroidal ideal, it follows from \cite[Theorem 2.3]{HH_discrete} that $\Dc(G,\mathfrak{c})$ is a discrete polymatroid \cite[Definition 2.1]{HH_discrete}.  Let $\conv(\Dc(G,\mathfrak{c})) $ denote the convex hull of $\Dc(G,\mathfrak{c})$ in $\RR^n$.  It then follows from \cite[Theorem 3.4]{HH_discrete} that $\conv(\Dc(G,\mathfrak{c}))$ is a polymatroid \cite[Definition 1.1]{HH_discrete}.  In particular, $\conv(\Dc(G,\mathfrak{c}))$ is normal \cite[Theorem 6.1]{HH_discrete}. 

Let $2^{[n]}$ denote the set of subsets of $[n]:=\{1, \ldots, n\}$.  The {\em ground set rank function} \cite[p.~243]{HH_discrete} $\rho_{(G,\mathfrak{c})} : 2^{[n]} \to \ZZ_{>0}$ of $\conv(\Dc(G,\mathfrak{c}))$ is defined by setting
\[
\rho_{(G,\mathfrak{c})}(X) = \max\left\{\sum_{i \in X}a_i : x_1^{a_1}\cdots x_n^{a_n} \in \Bc(G,\mathfrak{c})\right\}
\]
for $\emptyset \neq X \subset [n]$ together with $\rho_{(G,\mathfrak{c})}(\emptyset)=0$. A nonempty subset $A\subset [n]$ is called {\em $\rho_{(G,\mathfrak{c})}$-closed} if for any $B\subset [n]$ with $A \subsetneq B$, one has $\rho_{(G,\mathfrak{c})}(A) < \rho_{(G,\mathfrak{c})}(B)$. A nonempty subset $A\subset [n]$ is called {\em $\rho_{(G,\mathfrak{c})}$-separable} if there exist nonempty subsets $A'$ and $A''$ of $[n]$ with $A = A' \cup A''$ and $A' \cap A'' = \emptyset$ for which $\rho_{(G,\mathfrak{c})}(A) = \rho_{(G,\mathfrak{c})}(A') + \rho_{(G,\mathfrak{c})}(A'')$.  For $\emptyset \neq A \subset [n]$, we define the hyperplane $\Hc_A \subset \RR^n$ by
\[
\Hc_A = \{ (x_1,\ldots,x_n) \in \RR^n : \sum_{i\in A} x_i = \rho(A) \}.
\]  
For $i\in [n]$, we define the hyperplane $\Hc^{(i)}  \subset \RR^n$ by
\[
\Hc^{(i)} = \{ (x_1,\ldots,x_n) \in \RR^n : x_i = 0 \}.
\] 
Now, Edmonds \cite{Edmonds} says that
 
\begin{Lemma} \label{facets}
The facets of $\conv(\Dc(G,\mathfrak{c})) \subset \RR^n$ are all $\Hc^{(i)} \cap \conv(\Dc(G,\mathfrak{c}))$ with $i \in [n]$ and all $\Hc_A \cap \conv(\Dc(G,\mathfrak{c}))$, where $A$ ranges over all $\rho_{(G,\mathfrak{c})}$-closed and $\rho_{(G,\mathfrak{c})}$-inseparable subsets of $[n]$.
\end{Lemma}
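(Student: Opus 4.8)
The statement is Edmonds' description of the facets of a polymatroid, and I would deduce it from the inequality presentation of $P:=\conv(\Dc(G,\mathfrak{c}))$ together with the exchange properties of discrete polymatroids. Since $P$ is a polymatroid with ground set rank function $\rho:=\rho_{(G,\mathfrak{c})}$, it is precisely the set of $\xb=(x_1,\dots,x_n)$ with $x_i\ge 0$ for all $i\in[n]$ and $\sum_{i\in X}x_i\le\rho(X)$ for all $\emptyset\ne X\subseteq[n]$; moreover $P$ is $n$-dimensional, since the origin and $\eb_1,\dots,\eb_n$ lie in $\Dc(G,\mathfrak{c})$. Hence every facet of $P$ is cut out by one of these inequalities, and the task is to decide which of them are facet-defining. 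The inequality $x_i\ge 0$ always is: the origin together with $\eb_\ell$ for $\ell\in[n]\setminus\{i\}$ give $n$ affinely independent lattice points of $\Dc(G,\mathfrak{c})$ on $\Hc^{(i)}$.

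Next I would show that $P\cap\Hc_A$ is not a facet when $A$ is not $\rho$-closed or is $\rho$-separable. If $A$ is not closed, let $\bar A\supsetneq A$ be its $\rho$-closure, so that $\rho(\bar A)=\rho(A)$; for $\xb\in P\cap\Hc_A$ we get $\rho(A)=\sum_{i\in A}x_i\le\sum_{i\in\bar A}x_i\le\rho(\bar A)=\rho(A)$, which forces $x_j=0$ for every $j\in\bar A\setminus A$. Fixing such a $j_0$, the face $P\cap\Hc_A$ is then a proper face of the facet $P\cap\Hc^{(j_0)}$ --- proper because the origin lies in $P\cap\Hc^{(j_0)}$ but not in $\Hc_A$, as $\rho(A)\ge 1$ --- hence $\dim(P\cap\Hc_A)\le n-2$. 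If instead $A=A'\sqcup A''$ with $A',A''$ nonempty and $\rho(A)=\rho(A')+\rho(A'')$, the same computation gives $P\cap\Hc_A=P\cap\Hc_{A'}\cap\Hc_{A''}$, which is a proper face of $P\cap\Hc_{A'}$ --- proper because a base of the restriction of $\Dc(G,\mathfrak{c})$ to $A'$ lies in $P\cap\Hc_{A'}$ but not in $\Hc_{A''}$, since it vanishes on $A''$ while $\rho(A'')\ge 1$ --- so again $\dim(P\cap\Hc_A)\le n-2$. Together with the first paragraph, this shows that every facet of $P$ is one of the $\Hc^{(i)}\cap P$ or one of the $\Hc_A\cap P$ with $A$ both $\rho$-closed and $\rho$-inseparable.

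It remains to prove the converse: if $A$ is $\rho$-closed and $\rho$-inseparable, then $P\cap\Hc_A$ is a facet, which I would establish by exhibiting $n$ affinely independent points of $\Dc(G,\mathfrak{c})$ lying on $\Hc_A$. Let $\Dc_A:=\{\vb\in\Dc(G,\mathfrak{c}):v_\ell=0\text{ for }\ell\notin A\}$ be the restriction. Each base of $\Dc_A$ lies on $\Hc_A$, and it lies in $\Dc(G,\mathfrak{c})$ because for every $Y\subseteq[n]$ one has $\vb(Y)=\vb(Y\cap A)\le\rho(Y\cap A)\le\rho(Y)$; since $A$ is $\rho$-inseparable, $\Dc_A$ is a connected discrete polymatroid, so the convex hull of its bases, which all satisfy $\sum_{i\in A}y_i=\rho(A)$, has the maximal possible dimension $|A|-1$, and this yields $|A|$ affinely independent such points $\vb^{(1)},\dots,\vb^{(|A|)}$. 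For each $j\notin A$, $\rho$-closedness gives $\rho(A\cup\{j\})>\rho(A)$; starting from $\vb^{(1)}$ and repeatedly applying the one-element exchange property of discrete polymatroids inside the restriction $\Dc_{A\cup\{j\}}$ --- each step being forced to increase the $j$-th coordinate, since raising a coordinate indexed by $A$ would violate $\sum_{i\in A}x_i\le\rho(A)$ --- I reach $\wb^{(j)}:=\vb^{(1)}+(\rho(A\cup\{j\})-\rho(A))\eb_j$, an element of $\Dc(G,\mathfrak{c})$ that lies on $\Hc_A$ and has positive $j$-th coordinate. The $n$ points $\vb^{(1)},\dots,\vb^{(|A|)},(\wb^{(j)})_{j\notin A}$ are affinely independent, because the first $|A|$ of them vanish on every coordinate outside $A$, while the matrix obtained from the remaining $n-|A|$ points by keeping only the coordinates outside $A$ is diagonal with positive diagonal entries. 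Hence $\dim(P\cap\Hc_A)\ge n-1$, so it is a facet, completing the proof.

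I expect the one genuinely nontrivial ingredient to be the fact invoked above that a connected (i.e.\ $\rho$-inseparable) discrete polymatroid has base polytope of full dimension $|A|-1$: this is classical, but it rests on a careful application of the base exchange axiom, whereas everything else is bookkeeping with the defining inequalities of $P$ and with the greedy/exchange structure of $\Dc(G,\mathfrak{c})$.
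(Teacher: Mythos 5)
The paper offers no proof of this lemma at all: the text before it reads ``Now, Edmonds \cite{Edmonds} says that\ldots'' and the statement is simply a citation of Edmonds' classical facet description for polymatroids. Your argument is a correct, self-contained derivation of that cited result, adapted to the full-dimensional case at hand. The structure is right: (a) $P=\conv(\Dc(G,\mathfrak{c}))$ is cut out by $x_i\ge 0$ and $\sum_{i\in X}x_i\le\rho(X)$, so every facet is one of these; (b) the nonnegativity inequalities are all facet-defining here because $\eb_i\in\Dc(G,\mathfrak{c})$ for every $i$, a point the general Edmonds statement must hedge but you correctly observe is automatic in this setting; (c) $\Hc_A$ is redundant whenever $A$ fails to be $\rho$-closed or is $\rho$-separable, by showing $P\cap\Hc_A$ sits as a proper face inside another face; and (d) when $A$ is $\rho$-closed and $\rho$-inseparable you exhibit $n$ affinely independent lattice points on $\Hc_A$, the $|A|$ coming from the bases of the inseparable (hence connected) restriction to $A$ and the remaining $n-|A|$ by pushing a base of $\Dc_A$ along $\eb_j$ to saturate $\rho(A\cup\{j\})>\rho(A)$, the exchange argument correctly forcing only the $j$-coordinate to move. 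You also correctly flag the one genuinely nontrivial import, namely that an inseparable polymatroid has base polytope of dimension $|A|-1$; this is classical (Edmonds/Fujishige, and appears for discrete polymatroids in Herzog--Hibi) but is where the real content of the ``only if'' direction of (d) lives. Since the paper proves nothing itself, there is no authorial route to contrast with; what you have written is in effect the missing proof of the cited theorem.
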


In the present paper, starting an exhibition of level* polytopes and nonlevel* polytopes in Section $1$, in Section $2$, we study pseudo-Gorenstein* and level* polytopes arising from complete bipartite graphs.  A finite graph $G$ on $n$ vertices is called  {\em labeling pseudo-Gorenstein*} if there is $\mathfrak{c} \in (\ZZ_{>0})^{n}$ for which $\conv(\Dc(G,\mathfrak{c})) \subset \RR^n$ is pseudo-Gorenstein*.   Labeling pseudo-Gorenstein* complete bipartite graphs are $K_{n,m}$ with $n\leq m\leq 2n-1$ (Theorem \ref{complete_bipartite}).  Theorem \ref{compbip-level} is a numerical characterization for $\conv(\Dc(K_{m,n},\mathfrak{c})$ to be level*.  In Section $3$, we discuss convex polytopes of Veronese type (Definition \ref{veronese_type}).  Theorem \ref{veronese_criterion} is a numerical characterization for a convex polytope of Veronese type to be level*.  Finally, in Section $4$, a complete classification of labeling pseudo-Gorenstein* trees is obtained (Theorem \ref{treeclassify}).

\section{Examples of level* and nonlevel* polytopes}

We start an exhibition of level* polytopes and nonlevel* polytopes.  We refer the reader to \cite[Section 1]{HSF_1} and \cite[Section 1]{HSF_5} for basic terminologies on finite graphs. 

\begin{Example}
{\em
Let $G$ be the finite graph on $\{x_1,x_2,x_3\}$, $E(G) =\{ \{x_1,x_2\},\{x_2,x_3\} \}$ and $\mathfrak{c} = (2,3,2)$.  One easily sees that $\conv(\Dc(G,\mathfrak{c})) \subset \RR^3$ is defined by the system of linear inequalities
$
0 \leq x_1 \leq 2, 0 \leq x_2 \leq 3, 0 \leq x_3 \leq 2$ and $x_1+x_3 \leq 3.
$  
By virtue of \cite[Theorem 4.2]{BH}, it follows that $\conv(\Dc(G,\mathfrak{c}))$ is level*.
} 
\end{Example}

\begin{Example}
{\em 
If $K_{3,4}$ is the complete bipartite graph on $\{x_1,x_2,x_3\}\sqcup \{x_4,x_5,x_6,x_7\}$ and $\mathfrak{c} = (2,\ldots,2) \in (\ZZ_{>0})^7$, then $\conv(\Dc(G,\mathfrak{c}))$ is defined by the system of linear inequalities $0\leq x_i \leq 2$ for $1 \leq i \leq 7$ together with $x_4+x_5+x_6+x_7\leq 6$. Thus.
    \[
    (\conv(\Dc(k_{3,4},\mathfrak{c})) \setminus \partial \conv(\Dc(K_{3,4},\mathfrak{c}))) \cap \ZZ^7 = \{(1,\ldots,1)\}.
    \]
    Since $|(\conv(\Dc(G,\mathfrak{c})) \setminus \partial \conv(\Dc(G,\mathfrak{c}))) \cap \ZZ^7|=1$ and since $\conv(\Dc(G,\mathfrak{c})$ is not reflexive, it follows that $\conv(\Dc(G,\mathfrak{c})$ is not level*.  In fact, $(3,3,3,3,3,3,2)$ belongs to $2(\conv(\Dc(G,\mathfrak{c})) \setminus \partial \conv(\Dc(G,\mathfrak{c}))) \cap \ZZ^7$, however, $$(3,3,3,3,3,3,2)-(1,1,1,1,1,1,1)=(2,2,2,2,2,2,1)$$ does not belong to $\conv(\Dc(G,\mathfrak{c}))$.
    }
\end{Example}

Lemma \ref{dilation} might be indispensable in our forthcoming project on lattice polytopes.

\begin{Lemma}
\label{dilation}
Let $G$ be a finite graph on $[n]$ and $\mathfrak{c}\in (\ZZ_{>0})^n$.  One has
        \[   N\conv(\Dc(G,\mathfrak{c}))\subset \conv(\Dc(G,N\mathfrak{c})), \, \, \, \, \, N=1,2,\ldots.
    \]
\end{Lemma}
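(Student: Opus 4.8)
The plan is to reduce the stated inclusion to the single numerical inequality
\[
N\,\rho_{(G,\mathfrak{c})}(A)\ \le\ \rho_{(G,N\mathfrak{c})}(A)\qquad\text{for every } A\subseteq[n],
\]
and then to establish this inequality by a matching-theoretic augmentation argument. For the reduction, recall that by Lemma~\ref{facets} and the fact that $\conv(\Dc(G,\mathfrak{c}))$ is the polymatroid with ground set rank function $\rho_{(G,\mathfrak{c})}$ \cite{HH_discrete}, one has $\conv(\Dc(G,\mathfrak{c}))=\{\xb\in\RR^n_{\ge0}:\sum_{i\in A}x_i\le\rho_{(G,\mathfrak{c})}(A)\text{ for all }A\subseteq[n]\}$, and therefore $N\conv(\Dc(G,\mathfrak{c}))=\{\xb\in\RR^n_{\ge0}:\sum_{i\in A}x_i\le N\rho_{(G,\mathfrak{c})}(A)\text{ for all }A\}$; comparing with the analogous description of $\conv(\Dc(G,N\mathfrak{c}))$ gives exactly the displayed inequality (the constraints $x_i\ge0$ being common to both sides).

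Next I would read $\rho$ off an edge-packing quantity. Identify a finite multiset $F$ of edges of $G$ with the vector $\chi_F\in\ZZ^n_{\ge0}$ whose $i$-th entry counts the edges of $F$ containing $i$. Since $(I(G)^q)_{\mathfrak{c}}\ne(0)$ precisely when there is a multiset $F$ of $q$ edges with $\chi_F\le\mathfrak{c}$, we obtain $\delta_{\mathfrak{c}}(I(G))=\nu_G(\mathfrak{c}):=\max\{|F|:\chi_F\le\mathfrak{c}\}$. Because $(I(G)^{\delta_{\mathfrak{c}}(I(G))})_{\mathfrak{c}}$ is polymatroidal \cite[Theorem~4.3]{HSF_1} its minimal generators are equigenerated, and a short degree/divisibility argument identifies this common degree as $2\nu_G(\mathfrak{c})$; hence $\Bc(G,\mathfrak{c})=\{x^{\chi_F}:|F|=\nu_G(\mathfrak{c}),\ \chi_F\le\mathfrak{c}\}$, so that $\rho_{(G,\mathfrak{c})}(A)=\max\{\sum_{i\in A}(\chi_F)_i:|F|=\nu_G(\mathfrak{c}),\ \chi_F\le\mathfrak{c}\}$, and likewise with $\mathfrak{c}$ replaced by $N\mathfrak{c}$.

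To prove the displayed inequality, fix $A$ and choose a multiset $F$ with $|F|=\nu_G(\mathfrak{c})$, $\chi_F\le\mathfrak{c}$ and $\sum_{i\in A}(\chi_F)_i=\rho_{(G,\mathfrak{c})}(A)$. Multiplying every multiplicity by $N$ yields $NF$ with $\chi_{NF}=N\chi_F\le N\mathfrak{c}$ and $\sum_{i\in A}(\chi_{NF})_i=N\rho_{(G,\mathfrak{c})}(A)$, though possibly $|NF|=N\nu_G(\mathfrak{c})<\nu_G(N\mathfrak{c})$. I would then enlarge $NF$ to maximum size: modelling admissible edge multisets for $(G,N\mathfrak{c})$ as ordinary matchings in the graph $\widehat G$ obtained from $G$ by replacing each vertex $i$ by $Nc_i$ copies and each edge by the corresponding complete bipartite graph, Berge's augmenting-path theorem, applied whenever the current multiset is not of maximum cardinality, produces $F'$ with $\chi_{F'}\le N\mathfrak{c}$, $|F'|=|NF|+1$, and $\chi_{F'}=\chi_{NF}+\eb_u+\eb_w$ for two (not necessarily distinct) vertices $u,w$ still strictly below their $N\mathfrak{c}$-capacity. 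The key point is that $\chi_{F'}\ge\chi_{NF}$ coordinatewise, so $\sum_{i\in A}(\chi_{F'})_i\ge\sum_{i\in A}(\chi_{NF})_i$; iterating reaches $\widetilde F$ with $|\widetilde F|=\nu_G(N\mathfrak{c})$ and $\sum_{i\in A}(\chi_{\widetilde F})_i\ge N\rho_{(G,\mathfrak{c})}(A)$, whence $\rho_{(G,N\mathfrak{c})}(A)\ge N\rho_{(G,\mathfrak{c})}(A)$ by the formula of the previous paragraph.

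The main obstacle is exactly this enlargement step. One cannot simply append an extra edge: $NF$ may be a \emph{maximal} admissible multiset (no edge fits in the residual capacity $N\mathfrak{c}-\chi_{NF}$) without being of maximum cardinality — already visible for a triangle with $\mathfrak{c}=(1,1,1)$, $N=2$, and $F$ a single edge, where $\nu_G(N\mathfrak{c})=3$ but $|NF|=2$ and the leftover capacity carries no edge. What saves the argument is that an \emph{alternating rearrangement} is needed, not an append; the reduction to matchings in $\widehat G$ lets Berge's theorem supply one that simultaneously raises the cardinality by one and, because the endpoints of the augmenting path are unmatched in $\widehat G$, alters $\chi_{NF}$ only by $+\eb_u+\eb_w$ — the monotonicity that prevents the $A$-weight from dropping. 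Checking carefully that $F'$ is admissible for $(G,N\mathfrak{c})$ and that its packing vector has precisely the form $\chi_{NF}+\eb_u+\eb_w$ is the crux of the proof.
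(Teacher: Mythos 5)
Your proof is correct and takes a genuinely different route from the paper's, which is a one-liner on the ``V-side'' of the polytope: the paper writes $\Bc(G,\mathfrak{c})=\{u_1,\dots,u_s\}$, notes $N\conv(\Dc(G,\mathfrak{c}))=\conv(\{N\ab:\ab\in\Dc(G,\mathfrak{c})\})$, observes that every such $N\ab$ is the exponent of a divisor of some $u_i^N$, and then simply asserts $\{u_1^N,\dots,u_s^N\}\subset\Dc(G,N\mathfrak{c})$, from which the inclusion follows because $\Dc(G,N\mathfrak{c})$ is closed under divisors. You instead pass to the ``H-side'' via Edmonds' description, reducing the inclusion to the rank inequality $N\rho_{(G,\mathfrak{c})}(A)\le\rho_{(G,N\mathfrak{c})}(A)$ and proving it by a Berge augmenting-path argument in a split graph, carefully noting that augmentation increases the degree vector coordinatewise. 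The two routes hinge on the same combinatorial input: the assertion $u_i^N\in\Dc(G,N\mathfrak{c})$ means $u_i^N$ divides a minimal generator of $(I(G)^{\delta_{N\mathfrak{c}}})_{N\mathfrak{c}}$, i.e.\ the bounded edge multiset $N F_i$ can be enlarged to one of maximum size without shrinking any coordinate, and this is exactly the monotone-augmentation fact you establish (your triangle example with $\mathfrak{c}=(1,1,1)$, $N=2$ shows why one cannot merely append an edge). In short, the paper's proof is a much slicker reduction that treats this extension step as immediate, while your proof is longer but actually spells out the underlying $b$-matching argument that makes it true; you have in effect filled in the step the paper takes for granted.
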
 

\begin{proof}
Let $\Bc(G,\mathfrak{c}) = \{u_1, \ldots, u_s\}$.
It follows that $(a_1, \ldots, a_n) \in \ZZ^n$ belongs to $N \Dc(G,\mathfrak{c})$ if and only if $x_1^{a_1}\cdots x_n^{a_n}$ can divide one of the monomials $u_1^N, \ldots, u_s^N$.  Since $\{u_1^N, \ldots, u_s^N\} \subset \Dc(G,N\mathfrak{c}))$, we have $N\conv(\Dc(G,\mathfrak{c})) = \conv(N \Dc(G,\mathfrak{c})) \subset \conv(\Dc(G,N\mathfrak{c}))$, as desired.
\, \, \, \, \, \, \, \,
\end{proof}

\begin{Example}
{\em 
Let $G$ be the triangle on $\{x_1,x_2,x_3\}$ and 
$\mathfrak{c} = (1,1,1)$.  Then $\conv(\Dc(G,\mathfrak{c}))$ is defined by the system of linear inequalities $0\leq x_1 \leq 1, 0\leq x_2 \leq 1, 0\leq x_3 \leq 1$ and $x_1+x_2+x_3 \leq 2$.  Hence $2\conv(\Dc(G,\mathfrak{c}))$ is defined by the system of linear inequalities $0\leq x_1 \leq 2, 0\leq x_2 \leq 2, 0\leq x_3 \leq 2$ and $x_1+x_2+x_3 \leq 4$.  On the other hand, $\conv(\Dc(G,2\mathfrak{c}))$ is defined by defined by the system of linear inequalities $x_1 \leq 2, x_2 \leq 2, x_3 \leq 2$.  It then follows that $2\conv(\Dc(G,\mathfrak{c}))\subsetneq\conv(\Dc(G,2\mathfrak{c}))$.
}
\end{Example}

In the following sections, to simplify the notation, We sometimes use $[n]=\{1, \ldots, n\}$, instead of $V(G) = \{x_1, \ldots, x_n\}$, for the vertex set of a finite graph $G$. 

\section{Complete bipartite graphs}
Let $G$ be a finite graph on $[n]$.  We say that $G$ is {\em labeling pseudo-Gorenstein*} if there is $\mathfrak{c} \in (\ZZ_{>0})^{n}$ for which $\conv(\Dc(G,\mathfrak{c})) \subset \RR^n$ is pseudo-Gorenstein*.  Every Hamiltonian graph is labeling pseudo-Gorenstein* (\cite[Example 4.3]{HSF_5}).  In particular, every cycle, every complete graph and every complete bipartite graph $K_{n,n}$ with $n \geq 2$ is labeling pseudo-Gorenstein*.  Every finite graph with a perfect matching is labeling pseudo-Gorenstein* (\cite[Example 4.3]{HSF_5}). Every regular graph is labeling pseudo-Gorenstein* (\cite[Theorem 7.4]{HSF_5}).

\begin{Lemma} \label{compbip-poly1}
Let $G=K_{m,n}$ be a complete bipartite graph on $[m+n]$ with vertex partition $[m]\sqcup ([m+n]\setminus [m])$ and $\mathfrak{c}=(c_1, \ldots, c_{m+n})\in (\ZZ_{>0})^{m+n}$. If $c_{1}+\cdots +c_m=c_{m+1}+\cdots +c_{m+n}$, then $\conv(\Dc(G,\mathfrak{c})) \subset \RR^{m+n}$ is the cube defined by the system of inequalities $0\leq x_i\leq c_i, 1 \leq i \leq  m+n$.
\end{Lemma}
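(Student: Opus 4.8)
The plan is to identify $\Bc(G,\mathfrak c)$ explicitly and show that it consists of a single monomial. First I would record the elementary description of the powers of the edge ideal of a complete bipartite graph: writing $A=[m]$ and $B=[m+n]\setminus[m]$, a monomial $u=x_1^{a_1}\cdots x_{m+n}^{a_{m+n}}$ lies in $I(G)^q$ if and only if $\sum_{i\in A}a_i\ge q$ and $\sum_{i\in B}a_i\ge q$. The ``only if'' is immediate, since a divisor of $u$ that is a product of $q$ edges consumes $q$ of the variable occurrences in $A$ and $q$ in $B$. For the ``if'' direction, from $\sum_{i\in A}a_i\ge q$ pick $q$ occurrences among the $A$-variables (with multiplicity of $x_i$ bounded by $a_i$) and likewise $q$ occurrences among the $B$-variables, and pair them off arbitrarily; since $G=K_{m,n}$ is complete bipartite, every such pair is an edge, so the resulting product of $q$ edges divides $u$.

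Next I would compute $\delta_{\mathfrak c}(I(G))$. Set $C=c_1+\cdots+c_m=c_{m+1}+\cdots+c_{m+n}$. For a monomial $u$ with $a_i\le c_i$ for all $i$ one has $\sum_{i\in A}a_i\le C$ and $\sum_{i\in B}a_i\le C$, so by the membership criterion $u\in(I(G)^q)_{\mathfrak c}\setminus\{0\}$ forces $q\le C$; conversely the monomial $u^{\ast}=x_1^{c_1}\cdots x_{m+n}^{c_{m+n}}$ satisfies $\sum_{i\in A}a_i=\sum_{i\in B}a_i=C$, hence lies in $(I(G)^C)_{\mathfrak c}$. Therefore $\delta_{\mathfrak c}(I(G))=C$. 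Moreover, any monomial generator $u$ of $(I(G)^{C})_{\mathfrak c}$ must satisfy $a_i\le c_i$ together with $\sum_{i\in A}a_i\ge C$ and $\sum_{i\in B}a_i\ge C$; combined with the upper bounds $\sum_{i\in A}a_i\le C$ and $\sum_{i\in B}a_i\le C$ this forces $a_i=c_i$ for every $i$. Hence $(I(G)^{C})_{\mathfrak c}$ is the principal ideal generated by $u^{\ast}$, and $\Bc(G,\mathfrak c)=\{u^{\ast}\}$.

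Finally I would unwind the definitions of $\Mc$ and $\Dc$. Since $\Bc(G,\mathfrak c)=\{u^{\ast}\}$, we get $\Mc(G,\mathfrak c)=M_{\le u^{\ast}}$, the set of monomial divisors of $u^{\ast}$, which is exactly $\{x_1^{a_1}\cdots x_{m+n}^{a_{m+n}}:0\le a_i\le c_i \text{ for all } i\}$. Consequently $\Dc(G,\mathfrak c)=\{(a_1,\ldots,a_{m+n})\in\ZZ^{m+n}:0\le a_i\le c_i\}$ is the full set of lattice points of the box $\prod_{i=1}^{m+n}[0,c_i]$, and its convex hull is precisely the cube $\{(x_1,\ldots,x_{m+n})\in\RR^{m+n}:0\le x_i\le c_i\}$, as claimed. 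I expect the only point needing any care is the ``if'' half of the membership criterion in the first step — checking that the chosen variable occurrences can be grouped into genuine edges — but this is automatic for a complete bipartite graph, and everything afterwards is bookkeeping with the definitions of $\delta_{\mathfrak c}$, $\Bc$, $\Mc$ and $\Dc$.
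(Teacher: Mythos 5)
Your proof is correct and takes the same route as the paper: the paper's entire argument is the one-line observation that $\Bc(G,\mathfrak c)=\{x_1^{c_1}\cdots x_{m+n}^{c_{m+n}}\}$, from which the cube description is immediate, and you have simply supplied the (straightforward) verification of that claim via the membership criterion for powers of the edge ideal of $K_{m,n}$.
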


\begin{proof}
Sunce $\Bc(G,\mathfrak{c}) = \{x_1^{c_1}x_2^{c_2}\cdots x_{m+n}^{c_{m+n}}\}$, the assertion follows.
\, \, \, \, \, \, \, \, \, \,
\end{proof}

\begin{Lemma} \label{compbip-poly2}
Let $G=K_{m,n}$ be a complete bipartite graph on $[m+n]$ with vertex partition $[m]\sqcup ([m+n]\setminus [m])$ and $\mathfrak{c}=(c_1, \ldots, c_{m+n})\in (\ZZ_{>0})^{m+n}$.  Suppose that $c_1+\cdots +c_m>c_{m+1}+\cdots +c_{m+n}$ and $c_i\leq c_{m+1}+\cdots +c_{m+n}$ for each $i\in [m]$.  Set$$A:=\{i\in [m] : c_i=c_{m+1}+\cdots +c_{m+n}\}$$and $B:=[m]\setminus A$. Then $\conv(\Dc(G,\mathfrak{c})) \subset \RR^{m+n}$ is defined by the system of linear inequalities 

$\bullet$ $0\leq x_i\leq c_i, \, \, \, \, \, i=m+1, \cdots, m+n$;

$\bullet$ $0\leq x_i\leq c_i, \, \, \, \, \, i\in B$;

$\bullet$ $0\leq x_i, \, \, \, \, \,  i\in A$;

$\bullet$ $x_1+\cdots +x_m\leq c_{m+1}+\cdots +c_{m+n}$.
\end{Lemma}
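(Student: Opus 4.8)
The plan is to identify the minimal generators $\Bc(G,\mathfrak{c})$, then compute the ground set rank function $\rho_{(G,\mathfrak{c})}$, and finally read off the facets via Lemma~\ref{facets}. First I would observe that a monomial $x_1^{a_1}\cdots x_{m+n}^{a_{m+n}}$ belongs to $I(K_{m,n})^q$ if and only if $a_1+\cdots+a_m\ge q$ and $a_{m+1}+\cdots+a_{m+n}\ge q$, because each generator $x_ix_j$ of $I(K_{m,n})$ pairs one vertex of $[m]$ with one vertex of $[m+n]\setminus[m]$, and the two choices are independent. Writing $d:=c_{m+1}+\cdots+c_{m+n}$ and invoking the hypothesis $c_1+\cdots+c_m>d$, this gives $\delta_{\mathfrak{c}}(I(G))=d$. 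Since $a_{m+1}+\cdots+a_{m+n}\le d$ once $a_i\le c_i$, membership of such a monomial in $I(G)^d$ forces $a_i=c_i$ for all $i\in[m+n]\setminus[m]$; a routine minimality check then gives
\[
\Bc(G,\mathfrak{c})=\Bigl\{\ \prod_{i=m+1}^{m+n}x_i^{c_i}\cdot\prod_{i=1}^{m}x_i^{a_i}\ :\ 0\le a_i\le c_i\ (i\le m),\ \ a_1+\cdots+a_m=d\ \Bigr\},
\]
a nonempty set because $0\le d<c_1+\cdots+c_m$.

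Next, for $X\subseteq[m+n]$ I set $X_1:=X\cap[m]$ and $X_2:=X\setminus[m]$. Maximizing $\sum_{i\in X}a_i$ over $x^{\mathbf{a}}\in\Bc(G,\mathfrak{c})$ reduces to maximizing $\sum_{i\in X_1}a_i$ subject to $0\le a_i\le c_i$ for $i\le m$ and $a_1+\cdots+a_m=d$; since $c_1+\cdots+c_m>d$ one can always reach the value $\min\bigl(\sum_{i\in X_1}c_i,\,d\bigr)$ but no more. Hence
\[
\rho_{(G,\mathfrak{c})}(X)=\min\Bigl(\sum_{i\in X\cap[m]}c_i,\ d\Bigr)+\sum_{i\in X\setminus[m]}c_i .
\]

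Finally, I would classify the $\rho_{(G,\mathfrak{c})}$-closed, $\rho_{(G,\mathfrak{c})}$-inseparable subsets directly from this formula: if $X_1$ and $X_2$ are both nonempty then $\rho_{(G,\mathfrak{c})}(X)=\rho_{(G,\mathfrak{c})}(X_1)+\rho_{(G,\mathfrak{c})}(X_2)$, so $X$ is separable; a subset of $[m+n]\setminus[m]$, and a subset $X\subseteq[m]$ with $\sum_{i\in X}c_i\le d$, splits into its singletons (using $c_i\le d$ for $i\le m$); a proper subset $X\subsetneq[m]$ with $\sum_{i\in X}c_i>d$ is not closed, because adjoining any $j\in[m]\setminus X$ keeps the rank equal to $d$; and $[m]$ itself is both closed and inseparable, a split being impossible since $c_1+\cdots+c_m>d$. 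Moreover $\{i\}$ is closed exactly when $i\in[m+n]\setminus[m]$ or $i\in B$. Feeding these closed inseparable subsets---the singletons $\{i\}$ with $i\in([m+n]\setminus[m])\cup B$ and the set $[m]$---together with all the hyperplanes $\Hc^{(i)}$ into Lemma~\ref{facets} yields exactly the asserted system of inequalities; for $i\in A$ the bound $x_i\le c_i=d$ does not appear as a facet because it is already implied by $x_1+\cdots+x_m\le d$ and $x_j\ge0$.

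The step I expect to require the most care is the enumeration in the last paragraph: one must be sure that no $\rho_{(G,\mathfrak{c})}$-closed, $\rho_{(G,\mathfrak{c})}$-inseparable set is overlooked, and one must handle correctly the borderline indices $i\in A$, where $c_i=d$ makes $\{i\}$ just fail to be closed. If this enumeration becomes cumbersome, an alternative is to bypass Lemma~\ref{facets} and check directly that the polymatroid $\{x\in\RR^{m+n}_{\ge0}:\sum_{i\in X}x_i\le\rho_{(G,\mathfrak{c})}(X)\text{ for every }X\subseteq[m+n]\}$ equals the stated polyhedron: the inclusion ``$\subseteq$'' follows by specializing to $X=\{i\}$ and $X=[m]$, and ``$\supseteq$'' follows from $\sum_{i\in X}x_i\le\min\bigl(\sum_{i\in X_1}c_i,\,d\bigr)+\sum_{i\in X_2}c_i$, where the estimate $x_i\le c_i$ needed for $i\in A$ comes from $x_i\le x_1+\cdots+x_m\le d=c_i$.
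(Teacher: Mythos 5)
Your proposal is correct and follows essentially the same route as the paper: identify $\Bc(G,\mathfrak{c})$ as $\{x_{m+1}^{c_{m+1}}\cdots x_{m+n}^{c_{m+n}}u : u \text{ a $(c_1,\ldots,c_m)$-bounded monomial of degree } d \text{ on } x_1,\ldots,x_m\}$, classify the $\rho$-closed $\rho$-inseparable subsets, and apply Lemma~\ref{facets}. The one genuine refinement you add is the closed-form formula $\rho_{(G,\mathfrak{c})}(X)=\min\bigl(\sum_{i\in X\cap[m]}c_i,\,d\bigr)+\sum_{i\in X\setminus[m]}c_i$, which the paper never writes down and instead establishes the needed rank identities case by case; your formula makes the separability and closedness checks immediate, and your alternative route (verifying the polymatroid inequalities directly rather than appealing to Lemma~\ref{facets}) is a reasonable backup though not needed.
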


\begin{proof}
Since $\Bc(G,\mathfrak{c})$ consists of those monomials of the form $x_{m+1}^{c_{m+1}}\cdots x_{m+n}^{c_{m+n}}u$, where $u$ is a $(c_1, \ldots, c_m)$-bounded monomial of degree $c_{m+1}+\cdots +c_{m+n}$ on variables $x_1, \ldots, x_m$, it follows that the singletons $\{m+1\}, \ldots, \{m+n\}$ are $\rho_{(G,\mathfrak{c})}$-closed and $\rho_{(G,\mathfrak{c})}$-inseparable.  In addition, the singleton $\{i\}$ is $\rho_{(G,\mathfrak{c})}$-closed and $\rho_{(G,\mathfrak{c})}$-inseparable for each $i\in B$ and the singleton $\{j\}$ is not $\rho_{(G,\mathfrak{c})}$-closed for each $j\in A$.  

Clearly $[m]$ is $\rho_{(G,\mathfrak{c})}$-closed. We show that $[m]$ is $\rho_{(G,\mathfrak{c})}$-inseparable. One has $\rho_{(G,\mathfrak{c})}([m])=c_{m+1}+\cdots +c_{m+n}$.   Let $A_1$ and $A_2$ be nonempty subsets of $[m]$ with $A_1 \cap A_2 = \emptyset$ and $A_1\cup A_2=[m]$.  If $\sum_{i\in A_1}c_i\geq c_{m+1}+\cdots +c_{m+n}$, then $\rho_{(G,\mathfrak{c})}(A_1)=c_{m+1}+\cdots +c_{m+n}$.  Consequently,
\[
\rho_{(G,\mathfrak{c})}(A_1)+\rho_{(G,\mathfrak{c})}(A_2) > c_{m+1}+\cdots +c_{m+n}=\rho_{(G,\mathfrak{c})}([m]).
\]
So, let $\sum_{i\in A_1}c_i< c_{m+1}+\cdots +c_{m+n}$.  Similarly, $\sum_{i\in A_2}c_i < c_{m+1}+\cdots +c_{m+n}$.  Then $\rho_{(G,\mathfrak{c})}(A_j)=\sum_{i\in A_j}c_i$ for $j=1,2$.  Furthermore,  
$$\rho_{(G,\mathfrak{c})}(A_1)+\rho_{(G,\mathfrak{c})}(A_2)=\sum_{i=1}^mc_i>c_{m+1}+\cdots +c_{m+n}=\rho_{(G,\mathfrak{c})}([m]).$$   
Hence, $[m]$ is $\rho_{(G,\mathfrak{c})}$-inseparable, as desired. 

Now, suppose that $X$ is a $\rho_{(G,\mathfrak{c})}$-closed and $\rho_{(G,\mathfrak{c})}$-inseparable subset of $[m+n]$ with $|X|\geq 2$. If $k\in X$ for some $k\in \{m+1, \ldots, m+n\}$, then$$\rho_{(G,\mathfrak{c})}(\{k\})+\rho_{(G,\mathfrak{c})}(X\setminus \{k\})=\rho_{(G,\mathfrak{c})}(X),$$which is a contradiction. Thus, $k\notin X$ for each $k\in \{m+1, \ldots, m+n\}$. In other words, $X\subset [m]$. We show that $X=[m]$.  By contradiction, suppose that $X$ is a proper subset of $[m]$. If $\rho_{(G,\mathfrak{c})}(X) <c_{m+1}+\cdots +c_{m+n}$, then$$\rho_{(G,\mathfrak{c})}(X)=\sum_{i\in X}c_i.$$It then follows that, for any pair of nonempty sets $X_1$ and $X_2$ with $X_1 \cap X_2 = \emptyset$ and $X=X_1\cup X_2$, one has $$\rho_{(G,\mathfrak{c})}(X_1)+\rho_{(G,\mathfrak{c})}(X_2)=\sum_{i\in X_1}c_i+\sum_{i\in X_1}c_i=\sum_{i\in X}c_i=\rho_{(G,\mathfrak{c})}(X),$$a contradiction. So, $\rho_{(G,\mathfrak{c})}(X)=c_{m+1}+\cdots +c_{m+n}$.  However, this is impossible, as $[m]$ properly contains $X$ and $\rho_{(G,\mathfrak{c})}([m])=\rho_{(G,\mathfrak{c})}(X)=c_{m+1}+\cdots +c_{m+n}$.

So far, we proved that $\rho_{(G,\mathfrak{c})}$-closed and $\rho_{(G,\mathfrak{c})}$-inseparable subsets of $[m+n]$ are exactly the singletons $\{m+1\}, \ldots \{m+n\}$, the singletons $\{i\}$ for each $i\in B$ together with the set $[m]$.  Finally, Lemma \ref{facets} completes the proof.
\hspace{3.2cm}
\end{proof}

\begin{Corollary}
    \label{compbip_inner}
Let $G=K_{m,n}$ be a complete bipartite graph on $[m+n]$ with vertex partition $[m]\sqcup ([n+m]\setminus [m])$ and $\mathfrak{c}=(c_1, \ldots, c_{m+n})\in (\ZZ_{>0})^{m+n}$. Suppose that $c_1+\cdots +c_m>c_{m+1}+\cdots +c_{m+n}$ and $c_i\leq c_{m+1}+\cdots +c_{m+n}$ for each $i\in [m]$. Set$$A:=\{i\in [m] \mid c_i=c_{m+1}+\cdots +c_{m+n}\}$$and $B:=[m]\setminus A$. Then $(\conv(\Dc(G,\mathfrak{c})) \setminus \partial \conv(\Dc(G,\mathfrak{c}))) \cap \ZZ^{n+1} \neq \emptyset$ if and only if $c_i \geq 2$ for each $i\in ([m+n])\setminus [m])\cup B$ and $c_{m+1}+\cdots +c_{m+n} \geq m+1$. 
\end{Corollary}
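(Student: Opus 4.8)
The plan is to read off the facet presentation of $\Pc:=\conv(\Dc(G,\mathfrak{c}))$ supplied by Lemma \ref{compbip-poly2} and to translate the condition $(\Pc\setminus\partial\Pc)\cap\ZZ^{m+n}\neq\emptyset$ into an explicit system of integer inequalities. Since $\Pc$ is full-dimensional and each facet-defining inequality in Lemma \ref{compbip-poly2} has integer coefficients and integer constant term, a lattice point $\ab=(a_1,\ldots,a_{m+n})$ lies in the interior $\Pc\setminus\partial\Pc$ if and only if every one of those inequalities holds strictly; equivalently, $1\leq a_i\leq c_i-1$ for all $i\in\{m+1,\ldots,m+n\}\cup B$, $a_i\geq 1$ for all $i\in A$, and $a_1+\cdots+a_m\leq c_{m+1}+\cdots+c_{m+n}-1$. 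This reduction is the one step I would verify with some care: it uses that the presentation in Lemma \ref{compbip-poly2} is genuinely the (irredundant) facet presentation of a full-dimensional polytope, together with the integrality of all data. Note in particular that for $i\in A$ there is no upper-bound facet $x_i\leq c_i$, so the only constraint on $a_i$ beyond $a_i\geq 1$ comes from the simplex inequality $x_1+\cdots+x_m\leq c_{m+1}+\cdots+c_{m+n}$.

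Granting this, the ``only if'' direction is immediate: the inequalities $1\leq a_i\leq c_i-1$ force $c_i\geq 2$ for every $i\in\{m+1,\ldots,m+n\}\cup B$, while $a_i\geq 1$ for every $i\in[m]=A\cup B$ gives $a_1+\cdots+a_m\geq m$, so $m\leq c_{m+1}+\cdots+c_{m+n}-1$, i.e. $c_{m+1}+\cdots+c_{m+n}\geq m+1$.

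For the ``if'' direction I would exhibit the all-ones vector $(1,1,\ldots,1)\in\ZZ^{m+n}$ as a witness: under the hypotheses $c_i\geq 2$ for all $i\in\{m+1,\ldots,m+n\}\cup B$ and $c_{m+1}+\cdots+c_{m+n}\geq m+1$, it satisfies $1\leq 1\leq c_i-1$ on $\{m+1,\ldots,m+n\}\cup B$, satisfies $1\geq 1$ on $A$, and satisfies $a_1+\cdots+a_m=m\leq c_{m+1}+\cdots+c_{m+n}-1$, hence lies in $\Pc\setminus\partial\Pc$. This completes the equivalence. As a whole the argument is essentially bookkeeping once Lemma \ref{compbip-poly2} is in hand; the only conceivable obstacle is an error in translating the facet inequalities into interior lattice-point conditions, which is why I would pin down the strict-inequality description of the interior first.
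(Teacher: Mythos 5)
Your argument is correct and is exactly the reasoning the paper leaves implicit (the corollary is stated without proof as an immediate consequence of Lemma \ref{compbip-poly2}): since $\Pc$ is full-dimensional and all normals in the $H$-description are nonzero, the interior is precisely the locus where every listed inequality is strict, and the translation to the integer conditions $1\leq a_i\leq c_i-1$ on $(\{m+1,\ldots,m+n\}\cup B)$, $a_i\geq 1$ on $A$, and $a_1+\cdots+a_m\leq c_{m+1}+\cdots+c_{m+n}-1$ gives both directions, with $(1,\ldots,1)$ as the witness. One small remark: the caution about irredundancy is unnecessary, since for any valid $H$-description of a full-dimensional polytope with nonzero normals the interior is exactly where all inequalities hold strictly; also note the statement's ``$\ZZ^{n+1}$'' is a typo for $\ZZ^{m+n}$, which you have silently and correctly used.
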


\begin{Theorem}
    \label{complete_bipartite}
The labeling pseudo-Gorenstein* complete bipartite graphs are $K_{n,m}$ with $n\in \ZZ_{>0}$ and $n\leq m\leq 2n-1$.
\end{Theorem}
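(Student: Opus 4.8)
The plan is to determine, for $G=K_{m,n}$, all weight vectors $\mathfrak{c}$ that work, and then read off the condition on $(m,n)$. Write the bipartition of $K_{m,n}$ as $P\sqcup Q$ with $|P|=p:=\min\{m,n\}$ and $|Q|=q:=\max\{m,n\}$, and put $S_P=\sum_{i\in P}c_i$ and $S_Q=\sum_{j\in Q}c_j$. Since a monomial lies in $I(K_{m,n})^r$ exactly when both its $P$-degree and its $Q$-degree are $\geq r$, one gets $\delta_{\mathfrak{c}}(I(K_{m,n}))=\min\{S_P,S_Q\}$ and, by the same observation, $\Bc(K_{m,n},\mathfrak{c})$ consists precisely of the monomials in which every variable of the part with the smaller weight-sum is saturated (appears with exponent $c_i$) while the variables of the other part carry an exponent vector of total degree $\min\{S_P,S_Q\}$ bounded by the corresponding $c_j$'s. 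Feeding this into Lemma~\ref{facets} — exactly as in Lemmas~\ref{compbip-poly1} and~\ref{compbip-poly2} — shows that $\conv(\Dc(K_{m,n},\mathfrak{c}))$ is always a direct product of a box (in the saturated coordinates) and a polytope of the form $\{x\geq 0:\ x_i\leq c_i',\ \sum x_i\leq \delta_{\mathfrak{c}}\}$ (in the remaining coordinates), the latter collapsing to a box when $S_P=S_Q$. Hence the number of interior lattice points of $\conv(\Dc(K_{m,n},\mathfrak{c}))$ is the product of those of the two factors, and the problem becomes purely arithmetic.

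For the ``if'' part ($p\leq q\leq 2p-1$), I would simply take $\mathfrak{c}=(2,\dots,2)$. If $p=q$, the polytope is the cube $[0,2]^{2p}$, whose unique interior lattice point is $(1,\dots,1)$. If $p<q$ (which forces $p\geq 2$), then $S_P=2p<2q=S_Q$, $\delta_{\mathfrak{c}}=2p$, and Lemma~\ref{compbip-poly2} yields $\conv(\Dc(K_{m,n},\mathfrak{c}))=[0,2]^p\times\{x\in[0,2]^q:\ \sum_{j\in Q}x_j\leq 2p\}$; the only lattice point with all coordinates in $(0,2)$ is $(1,\dots,1)$, and $\sum_{j\in Q}1=q\leq 2p-1<2p$ shows it is interior, so there is exactly one interior lattice point. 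As $\conv(\Dc(K_{m,n},\mathfrak{c}))$ is a polymatroid it is normal, so it is pseudo-Gorenstein*.

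For the ``only if'' part, assume $q\geq 2p$ and let $\mathfrak{c}$ be arbitrary; I would show the number of interior lattice points of $\conv(\Dc(K_{m,n},\mathfrak{c}))$ is never $1$. First replace each $c_i$ in the heavier part by $\min\{c_i,\ \text{sum of the lighter part}\}$; this changes neither $\Bc(K_{m,n},\mathfrak{c})$ nor the polytope, and afterwards one is in the hypotheses of Lemma~\ref{compbip-poly1} (if $S_P=S_Q$) or of Lemma~\ref{compbip-poly2}, so the product structure above applies. If the total count is to be $1$, the box factor must be $[0,2]^{(\cdot)}$, i.e. all saturated $c_i$ equal $2$. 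Tracing this back: when $S_Q>S_P$ one gets $S_P=2p$, and the other factor lies in $\{\sum_{j\in Q}x_j\leq 2p\}$, whereas an interior lattice point would need all $q$ of those coordinates $\geq 1$, hence $\sum_{j\in Q}x_j\geq q\geq 2p$ — impossible for an interior point — so there is in fact no interior lattice point at all. When $S_P>S_Q$ (respectively $S_P=S_Q$), the same analysis additionally forces all $c_k$ to equal $2$ on both parts, whence $S_P=2p$ and $S_Q=2q$, and combined with the case hypothesis this gives $p>q$ (respectively $p=q$), contradicting $p\leq q$ (recall $q\geq 2p$ forces $p<q$). In every case the count differs from $1$, so $K_{m,n}$ is not labeling pseudo-Gorenstein*.

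The ``if'' direction is a one-line construction; the work, and the step I expect to be the main obstacle, is the ``only if'' direction — specifically, verifying that the facet description of $\conv(\Dc(K_{m,n},\mathfrak{c}))$ really degenerates into the claimed product of two factors for \emph{every} configuration of $\mathfrak{c}$ (including the equality case $S_P=S_Q$ and the cases where some $c_i$ exceeds the weight-sum of the opposite part, where the literal hypotheses of Lemma~\ref{compbip-poly2} fail and the preliminary reduction is needed), and carrying out the ``interior points multiply'' bookkeeping with the reduced caps and the degenerate small cases (e.g. a box factor $[0,1]^{(\cdot)}$, which already contributes count $0$). Once the product structure is secured, only the comparison of $p$, $q$ and $2p$ remains.
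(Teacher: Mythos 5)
Your proof is correct and follows essentially the same strategy as the paper: it relies on the facet descriptions of Lemmas~\ref{compbip-poly1} and~\ref{compbip-poly2}, takes $\mathfrak{c}=(2,\ldots,2)$ for sufficiency, and for necessity uses the observation that $(1,\ldots,1)+\eb_i$ must fail to be an interior lattice point, which pins down every cap at $2$ and yields the bounds $n\leq m\leq 2n-1$. Your explicit packaging of $\conv(\Dc(K_{m,n},\mathfrak{c}))$ as a product of a box and a Veronese-type factor, with interior lattice points multiplying, is a mild reorganization of what the paper carries out implicitly through the same facet data.
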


\begin{proof}
Let $G=K_{m,n}$ be a complete bipartite graph and $[m]\sqcup ([m+n]\setminus [m])$ its vertex partition. If $n\leq m\leq 2n-1$ and if $\mathfrak{c}=(2, \ldots, 2)\in (\ZZ_{>0})^{m+n}$, then it follows from Lemmas \ref{compbip-poly1} and \ref{compbip-poly2} that  $\conv(\Dc(G,\mathfrak{c})) \subset \RR^n$ is pseudo-Gorenstein*.

Suppose that $G$ is labeling pseudo-Gorenstein* and $\mathfrak{c}=(c_1, \ldots, c_{m+n})\in (\ZZ_{>0})^{m+n}$ satisfies the condition that  $\conv(\Dc(G,\mathfrak{c})) \subset \RR^n$ is pseudo-Gorenstein*. 

If $c_1+\cdots +c_m=c_{m+1}+\cdots +c_{m+n}$, then by using Lemma \ref{compbip-poly1} one has $c_1=\cdots =c_{m+n}=2$.  Hence $c_1+\cdots +c_m=c_{m+1}+\cdots +c_{m+n}$ implies that $m=n$.

Let $c_1+\cdots +c_m\neq c_{m+1}+\cdots +c_{m+n}$.  One may assume that $c_1+\cdots +c_m> c_{m+1}+\cdots +c_{m+n}$. Furthermore, one may also assume that $c_i\leq c_{m+1}+\cdots +c_{m+n}$ for each $i\in [m]$. Hence, $\conv(\Dc(G,\mathfrak{c}))$ coincides with the convex polytope described in Lemma \ref{compbip-poly2}.  One has  $(1,1,...,1) \in (\conv(\Dc(G,\mathfrak{c}) \setminus \partial \conv(\Dc(G,\mathfrak{c})) \cap \ZZ^{m+n}$ and therefore, $c_i\geq 2$, for each $i=m+1, \ldots, m+n$.  If there is $m+1 \leq i \leq m+n$ with $c_i\geq 3$, then $(1,1,...,1) + \eb_i \in \ZZ^{m+n}$ belongs to the interior of $\conv(\Dc(G,\mathfrak{c}))$, a contradiction. So, $c_i=2$, for each $i=m+1, \ldots, m+n$.  Since $(1,1,...,1)$ belongs to the interior of $\conv(\Dc(G,\mathfrak{c}))$, by using the last inequality of Lemma \ref{compbip-poly2}, one has $m\leq 2n-1$.  We show that $m\geq n$.  Let $m<n$.  It then follows from $$c_1+\cdots +c_m> c_{m+1}+\cdots +c_{m+n}=2n$$ that there is $j\in [m]$ with $c_j\geq 3$. Then $(1,1,...,1)+\eb_j\in \ZZ^{m+n}$ belongs to the interior of $\conv(\Dc(G,\mathfrak{c}))$, a contradiction.  Thus, $m\geq n$, as desired.
\hspace{9.2cm}
\end{proof}

We now come to a numerical characterization for $\conv(\Dc(K_{m,n},\mathfrak{c})$ to be level*.

\begin{Theorem}
\label{compbip-level} 
Let $G=K_{m,n}$ be a complete bipartite graph on $[m+n]$ with vertex partition $[m]\sqcup ([n+m]\setminus [m])$ and $\mathfrak{c}=(c_1, \ldots, c_{m+n})\in (\ZZ_{>0})^{m+n}$. Suppose that $c_1+\cdots +c_m>c_{m+1}+\cdots +c_{m+n}\geq m+1$ and $c_i\leq c_{m+1}+\cdots +c_{m+n}$ for each $i\in [m]$. Set$$A:=\{i\in [m] : c_i=c_{m+1}+\cdots +c_{m+n}\}$$and $B:=[m]\setminus A$, and suppose that $c_i\geq 2$ for each $i\in B\cup ([m+n]\setminus [m])$.  Then $\conv(\Dc(G,\mathfrak{c})$, which is defined by the system of linear inequalities described in Lemma \ref{compbip-poly2}, is level* if and only if the following conditions are satisfied.
 \begin{itemize}
\item[(i)]
There is no subset $\emptyset \neq X\subset B$ for which 
\begin{eqnarray*}
\label{ABCDEFG}
c_{m+1}+\cdots + c_{m+n}<\sum_{i\in X}c_i< c_{m+1}+\cdots + c_{m+n}-m+2|X|-1.  
\end{eqnarray*}

\item[(ii)]
There is no subset $\emptyset \neq X\subset [m]$ for which 
\begin{eqnarray*}
\label{abcdefg}
\sum_{i=1}^{m}c_i-\sum_{i\in X}c_i < c_{m+1}+\cdots + c_{m+n}\leq 2|X|+\sum_{i=1}^{m}c_i-\sum_{i\in X}c_i-m. 
\end{eqnarray*}
\end{itemize}
\end{Theorem}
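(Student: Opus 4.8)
The plan is to work directly with the explicit facet description of $\Pc := \conv(\Dc(G,\mathfrak{c}))$ from Lemma \ref{compbip-poly2} and to translate the definition of level* into a finite collection of lattice-point conditions. First I would record the interior point $\mathbf 1 = (1,\dots,1)$ and observe, as in Corollary \ref{compbip_inner}, that under the stated hypotheses the interior lattice points of $\Pc$ are exactly those $(a_1,\dots,a_{m+n}) \in \ZZ^n$ with $1 \le a_i \le c_i - 1$ for $i \in B \cup ([m+n]\setminus[m])$, $a_i \ge 1$ for $i \in A$, and $a_1 + \cdots + a_m \le c_{m+1}+\cdots+c_{m+n} - 1$. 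The key reduction is that $\Pc$ is level* iff for every $N \ge 2$ and every $\ab$ in $N(\Pc \setminus \partial\Pc)\cap\ZZ^n$ we can split off one interior lattice point; by an induction on $N$ and the normality of $\Pc$ (guaranteed by \cite[Theorem 6.1]{HH_discrete}), it suffices to check the case $N = 2$ together with the observation that $N(\Pc\setminus\partial\Pc) \supset (\Pc\setminus\partial\Pc) + (N-1)(\Pc\setminus\partial\Pc)$ handles the inductive step once $N=2$ is known. So the heart of the matter is: every lattice point of $2(\Pc\setminus\partial\Pc)$ is a sum of an interior lattice point and a lattice point of $\Pc$.

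Next I would make the $N=2$ condition combinatorially explicit. A lattice point $\ab = (a_1,\dots,a_{m+n})$ lies in $2(\Pc\setminus\partial\Pc)$ iff $2 \le a_i \le 2c_i - 2$ on $B \cup ([m+n]\setminus[m])$, $a_i \ge 2$ on $A$, and $\sum_{i\le m} a_i \le 2(c_{m+1}+\cdots+c_{m+n}) - 2$. We want $\ab = \ab_0 + \ab'$ with $\ab_0$ interior and $\ab' \in \Pc$. The coordinates on the ``$n$-side'' $[m+n]\setminus[m]$ and on $B$ are essentially free to distribute (each is a one-dimensional interval constraint), so the only genuine obstruction comes from the simplex-type inequality $\sum_{i\le m} x_i \le c_{m+1}+\cdots+c_{m+n}$ on the ``$m$-side.'' I would therefore isolate the $m$-side: the problem becomes, given $(a_1,\dots,a_m)$ with $2 \le a_i \le 2c_i-2$ on $B$, $a_i \ge 2$ on $A$, and $\sum a_i \le 2(c_{m+1}+\cdots+c_{m+n})-2$, decide whether it splits as $\ab_0^{(m)} + (\ab')^{(m)}$ with $\ab_0^{(m)}$ satisfying the interior constraints and $(\ab')^{(m)}$ satisfying the $\Pc$-constraints on the $m$-side. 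A greedy/exchange argument then shows that such a split fails precisely when the ``mass'' $\sum a_i$ is trapped in a bad window forced by a subset $X$ — either $X \subset B$ where the lower bounds $a_i \ge 2$ and upper bounds $a_i \le c_i$ (for the $\Pc$-part) pinch against the simplex bound, giving condition (i), or a subset $X \subset [m]$ where one is forced to put ``too much'' on $[m]\setminus X$ for the interior part, giving condition (ii). I expect the precise bookkeeping of these two windows — matching the inequalities $c_{m+1}+\cdots+c_{m+n} < \sum_{i\in X} c_i < c_{m+1}+\cdots+c_{m+n} - m + 2|X| - 1$ and its dual in (ii) to the actual feasibility gap — to be the main obstacle, and the cleanest route is a two-sided counting: compute the maximum of $\sum_{i\in X} a_i^{(0)}$ over admissible interior parts and the maximum of $\sum_{i\in X}(a_i')$ over admissible $\Pc$-parts, and note a valid decomposition of $\ab$ exists iff for every $X$ the two maxima jointly cover $\sum_{i\in X} a_i$.

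Finally I would assemble both directions. For necessity, given a violating $X$ I would exhibit an explicit lattice point $\ab \in 2(\Pc\setminus\partial\Pc)$ — put $a_i = c_i$ for $i\in X$ (or the appropriate extreme value), $a_i = 2$ elsewhere on the $m$-side, and fill the $n$-side minimally — and check by the counting above that no interior-plus-$\Pc$ splitting exists, so $\Pc$ is not level*. For sufficiency, assuming (i) and (ii) I would run the greedy decomposition: assign each $m$-side coordinate its share to the interior part as large as allowed (capped at $c_i - 1$, and at least $1$), pass the remainder to the $\Pc$-part, and verify — using (i) to rule out the $B$-obstruction and (ii) to rule out the $[m]$-obstruction — that both the simplex bound $\sum_{i\le m} x_i \le c_{m+1}+\cdots+c_{m+n}$ and all the box bounds hold for both summands; the $n$-side and $B$ box-coordinates are then distributed trivially since each such interval has length $\ge 1$. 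Combined with the induction on $N$ from the first paragraph, this gives the equivalence.
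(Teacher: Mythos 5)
Your central reduction — that it suffices to check the level* condition for $N=2$ — is not valid, and this gap runs through the rest of the plan. The containment you invoke, $N(\Pc\setminus\partial\Pc)\supset(\Pc\setminus\partial\Pc)+(N-1)(\Pc\setminus\partial\Pc)$, goes the wrong way: to carry out an induction on $N$ you would need to write a given lattice point $\ab\in N(\Pc\setminus\partial\Pc)\cap\ZZ^{m+n}$ as a sum of a lattice point in $(\Pc\setminus\partial\Pc)$ and a lattice point in $(N-1)(\Pc\setminus\partial\Pc)$ (or at least $(N-1)\Pc$), which is precisely the level* property itself, not a consequence of the $N=2$ case plus normality. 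Normality of $\Pc$ decomposes points of $N\Pc$ into $N$ summands from $\Pc$, but gives no control over whether any summand can be taken interior.

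Concretely, the necessity of condition (ii) cannot be witnessed at $N=2$ in general. In the paper's ``only if'' argument for (ii), given a violating $X\subset[m]$ with $X'=[m]\setminus X$, one needs $N$ satisfying
\[
|X|+\sum_{i\in X'}(Nc_i-1)\le N(c_{m+1}+\cdots+c_{m+n})-1,
\]
which rearranges (using $\sum_{i\in X'}c_i<c_{m+1}+\cdots+c_{m+n}$) to a lower bound of the form $N\ge 2|X|-m+1$; when $|X|$ is close to $m$ this forces $N$ on the order of $m$. The counterexample point $\mathfrak{a}\in N(\Pc\setminus\partial\Pc)\cap\ZZ^{m+n}$ is then built at that dilation, with $a_i=Nc_i-1$ on $X'$ and $\sum_{i\le m}a_i=N(c_{m+1}+\cdots+c_{m+n})-1$. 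So a proof that only examines $2(\Pc\setminus\partial\Pc)$ will fail to detect the obstruction encoded by (ii), and your sufficiency argument likewise has to be run for all $N\ge 2$, not just the base case. The remaining ingredients in your sketch (the greedy split on the $m$-side coordinates, the role of the simplex inequality, the two windows matching (i) and (ii)) do point in the right direction and resemble what the paper does, but they must be carried out for arbitrary $N$; the paper's sufficiency proof defines $b_i=\max\{0,a_i-c_i+1\}$, shows $\mathfrak{b}\in(N-1)\Pc$ (using Claims 1 and 2, whose failure would contradict (i)), and then handles the case $\mathfrak{b}'=\mathfrak{a}-\mathfrak{b}\notin\Pc\setminus\partial\Pc$ via a further redistribution whose failure would contradict (ii) — all with $N$ a free parameter.
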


\begin{proof}
{\bf (``only if'')}  First, suppose that (i) fails to be satisfied.  In other words, there exists $\emptyset \neq X\subset B$ for which $$c_{m+1}+\cdots + c_{m+n}<\sum_{i\in X}c_i< c_{m+1}+\cdots + c_{m+n}-m+2|X|-1.$$
It follows that
\begin{align*}
& c_{m+1}+\cdots + c_{m+n}+\sum_{i\in X}c_i-|X|\\  < & \, \min\big\{2(c_{m+1}+\cdots + c_{m+n})-1-m+|X|, \sum_{i\in X}(2c_i-1)\big\}.  
\end{align*}
Hence, for each $i\in X$, one can find an integer $0 < a_i\leq 2c_i-1$ with
\begin{align*}
& \, c_{m+1}+\cdots + c_{m+n}+\sum_{i\in X}c_i-|X|+1 \\
\leq & \, \sum_{i\in X}a_i
\leq  \, 2(c_{m+1}+\cdots + c_{m+n})-1-m+|X|.   
\end{align*}
Set $\mathfrak{b}:=(b_1, \ldots, b_{m+n})$, where $b_i=a_i$ if $i\in X$ and $b_i=1$ if $i\in [m+n]\setminus X$.  Let $\Pc = \conv(\Dc(G,\mathfrak{c}))$.  It follows from the choice of $a_i$'s that $\mathfrak{b}\in 2(\Pc \setminus \partial \Pc)\cap \ZZ^{m+n}$. Suppose that $\Pc$ is level*.  Then there exist $\mathfrak{b'}=(b_1', \ldots, b_{m+n}')\in \Pc\cap \ZZ^{m+n}$ and $\mathfrak{b''}=(b_1'', \ldots, b_{m+n}'')\in (\Pc \setminus \partial \Pc) \cap \ZZ^{m+n}$ with $\mathfrak{b}=\mathfrak{b'}+\mathfrak{b''}$. Note that $b_i''=1$ if $i\in [m+n]\setminus X$ and $b_i''\leq c_i-1$ if $i\in X$.  Hence, $$b_1''+\cdots +b_m''\leq (m-|X|)+\sum_{i\in X}(c_i-1)=m-2|X|+\sum_{i\in X}c_i.$$  Since $\mathfrak{b'}\in \Pc$, one has $b_1'+\cdots +b_m'\leq c_{m+1}+\cdots + c_{m+n}$. Thus,
\begin{align*}
&\,b_1+\cdots +b_m\\  =&\, (b_1'+b_1'')+\cdots +(b_m'+b_m'')\\  =& \, (b_1'+\cdots +b_m')+(b_1''+\cdots +b_m'')\\  \leq&\,  c_{m+1}+\cdots + c_{m+n}+m-2|X|+\sum_{i\in X}c_i
\end{align*}
which is a contradiction, as
\begin{align*}
& \, b_1+\cdots +b_m
\\
= & \, m-|X|+\sum_{i\in X}a_i\\  \geq & \,c_{m+1}+\cdots + c_{m+n}+m-2|X|+\sum_{i\in X}c_i+1.
\end{align*}

Second, suppose that (ii) fails to be satisfied. Let $\emptyset \neq X\subset [m]$ satisfy $$\sum_{i=1}^{m}c_i-\sum_{i\in X}c_i < c_{m+1}+\cdots + c_{m+n}\leq 2|X|+\sum_{i=1}^{m}c_i-\sum_{i\in X}c_i-m.$$ It follows from the second inequality that $$c_1+\cdots +c_m\geq c_{m+1}+\cdots + c_{m+n}+m.$$ Set $X':=[m]\setminus X$.  So, $\sum_{i=1}^{m}c_i-\sum_{i\in X}c_i<c_{m+1}+\cdots + c_{m+n}$ is equivalent to $$\sum_{i\in X'}c_i<c_{m+1}+\cdots + c_{m+n}.$$  One can find an integer $N\geq 2$ with $$|X|+\sum_{i\in X'}[Nc_i-1]\leq N(c_{m+1}+\cdots + c_{m+n})-1.$$Recall that $\Pc = \conv(\Dc(G,\mathfrak{c})$.  There is $\mathfrak{a}=(a_1, \ldots, a_{m+n})\in N(\Pc\setminus\partial\Pc)\cap \ZZ^{m+n}$ for which $a_i=Nc_i-1$ for each $i\in X'$ and $$a_1+\cdots +a_{m}=N(c_{m+1}+\cdots + c_{m+n})-1.$$  Suppose that $\Pc$ is level*. Then there are $\mathfrak{a'}=(a_1', \ldots, a_{m+n}')\in (N-1)\Pc\cap \ZZ^{m+n}$ and $\mathfrak{a''}=(a_1'', \ldots, a_{m+n}'')\in (\Pc\setminus\partial\Pc)\cap \ZZ^{m+n}$ with $\mathfrak{a}=\mathfrak{a'}+\mathfrak{a''}$. Then
\begin{align*}
a_1''+\cdots +a_m'' < & \, c_{m+1}+\cdots + c_{m+n} \\ \leq & \, 2|X|+\sum_{i=1}^{m}c_i-\sum_{i\in X}c_i-m\\ =& \, 2|X|+\sum_{i\in X'}c_i-m =|X|+\sum_{i\in X'}(c_i-1).  
\end{align*}
Since $a_i''\geq 1$ for each $i\in X$, it follows that$$\sum_{i\in X'}a_i''<\sum_{i\in X'}(c_i-1).$$
As $a_i=Nc_i-1$ for each $i\in X'$, we deduce that
\begin{align*}
\sum_{i\in X'}a_i'=&\,\sum_{i\in X'}a_i-\sum_{i\in X'}a_i''\\>&\,\sum_{i\in X'}(Nc_i-1)-\sum_{i\in X'}(c_i-1) \\=&\, (N-1)\sum_{i\in X'}c_i 
\end{align*}
which is a contradiction, as $\mathfrak{a'}\in (N-1)\Pc$.

\medskip

\noindent
{\bf (``if'')}  
Note that $\Pc=\conv(\Dc(G,\mathfrak{c}))$ is normal with $(1, \ldots, 1)\in (\Pc\setminus\partial\Pc)\cap \ZZ^{m+n}$.  Let $\mathfrak{a}=(a_1, \ldots, a_{m+n})\in N(\Pc\setminus \partial\Pc) \cap \ZZ^{m+n}$, where $2 \leq N \in \ZZ$.  Set $$b_i:=\max\{0, a_i-c_i+1\},$$for each $i\in [m+n]$ and $\mathfrak{b}:=(b_1, \ldots, b_{m+n})$. We prove $\mathfrak{b}\in (N-1)\Pc$. One has  

$\bullet$ $0\leq b_i\leq (N-1)c_i, \, \, \, \, \,i=m+1, \ldots, m+n$;

$\bullet$ $0\leq b_i\leq (N-1)c_i, \, \, \, \, \,i\in B$;

$\bullet$ $0\leq b_i, \, \, \, \, \, i\in A$.

\noindent
In order to prove $\mathfrak{b}\in (N-1)\Pc$, we need to show that$$b_1+\cdots +b_m\leq (N-1)(c_{m+1}+\cdots +c_{m+n}).$$If there is $i \in A$ with $b_i\geq 1$, then $$a_i=b_i+c_i-1=b_i+(c_{m+1}+\cdots +c_{m+n})-1.$$Thus,
\begin{align*}
 b_1+\cdots +b_m  \leq & \, a_1+\cdots + a_m-(c_{m+1}+\cdots +c_{m+n})+1\\  \leq & \, (N-1)(c_{m+1}+\cdots +c_{m+n}).   
\end{align*}
So, suppose that $b_i=0$ for each $i\in A$ and that  $b_{j_1}, \ldots, b_{j_s}$ are nonzero elements of $\{b_i:i\in B\}$. Then $a_{j_k}=b_{j_k}+(c_{j_k}-1)$ for each $k=1, \ldots, s$ and$$b_1+\cdots +b_m=b_{j_1}+\cdots+b_{j_s}.$$If$$a_{j_1}+\cdots +a_{j_s}\leq (c_{j_1}+\cdots +c_{j_s})+(N-1)(c_{m+1}+\cdots +c_{m+n})-s,$$ then
\begin{align*}
b_{j_1}+\cdots+b_{j_s}  =&\,a_{j_1}+\cdots+a_{j_s}-(c_{j_1}+\cdots + c_{j_s})+s\\ \leq & \, (N-1)(c_{m+1}+\cdots +c_{m+n}).  
\end{align*}
Therefore, suppose that$$a_{j_1}+\cdots +a_{j_s}> (c_{j_1}+\cdots +c_{j_s})+(N-1)(c_{m+1}+\cdots +c_{m+n})-s.$$

\smallskip

{\bf (Claim 1.)} $c_{j_1}+\cdots +c_{j_s}< (c_{m+1}+\cdots +c_{m+n})-m+2s-1$.

\smallskip

\noindent
In fact, since $$a_1+\cdots +a_m\leq N(c_{m+1}+\cdots +c_{m+n})-1$$ and since $a_i\geq 1$ for each $i\in [m]$, one has 
\begin{align*}
& \, c_{j_1}+\cdots +c_{j_s}<(a_{j_1}+\cdots+a_{j_s})-(N-1)(c_{m+1}+\cdots +c_{m+n})+s\\  \leq & \, N(c_{m+1}+\cdots +c_{m+n})-1-(m-s)-(N-1)(c_{m+1}+\cdots +c_{m+n})+s\\ =& \, (c_{m+1}+\cdots +c_{m+n})-m+2s-1,  
\end{align*}
as desired.

\smallskip

{\bf (Claim 2.)} $c_{j_1}+\cdots +c_{j_s}> c_{m+1}+\cdots +c_{m+n}$.

\smallskip

\noindent
In fact, since $a_i\leq Nc_i-1$ for each $i\in B$, one has
\begin{align*}
c_{j_1}+\cdots +c_{j_s} &<(a_{j_1}+\cdots+a_{j_s})-(N-1)(c_{m+1}+\cdots +c_{m+n})+s\\ & \leq N(c_{j_1}+\cdots +c_{j_s})-s-(N-1)(c_{m+1}+\cdots +c_{m+n})+s\\ & =N(c_{j_1}+\cdots +c_{j_s})-(N-1)(c_{m+1}+\cdots +c_{m+n}).   
\end{align*}
Consequently$$(N-1)(c_{m+1}+\cdots +c_{m+n})<(N-1)(c_{j_1}+\cdots +c_{j_s}),$$
as desired.

\smallskip

Setting $X=\{j_1, \ldots, j_s\}$ and combining Claims 1 and 2, one has $$c_{m+1}+\cdots +c_{m+n}<\sum_{i\in X}c_i< (c_{m+1}+\cdots +c_{m+n})-m+2|X|-1,$$which is a contradiction. Hence $\mathfrak{b}\in (N-1)\Pc$.

Now, for each $i\in [m+n]$, set $b_i':=a_i-b_i$. Let $\mathfrak{b'}=(b_1', \ldots, b_{m+n}')$. Thus, $\mathfrak{a}=\mathfrak{b}+\mathfrak{b'}$. So, if $\mathfrak{b'}\in \Pc\setminus\partial\Pc$, then we are done.  Suppose that $\mathfrak{b'}\notin \Pc\setminus\partial\Pc$.  One has

$\bullet$ $0< b_i'< c_i, \, \, \, \, \, i=m+1\ldots, m+n$;

$\bullet$ $0< b_i'< c_i, \, \, \, \, \, i\in B$;

$\bullet$ $0< b_i', \, \, \, \, \, i\in A$.

\noindent
It follows from $\mathfrak{b'}\notin \Pc\setminus\partial\Pc$ that $$b_1'+\cdots +b_m'\geq c_{m+1}+\cdots +c_{m+n}.$$ Hence, there is an integer $\ell \geq 0$ with $$b_1'+\cdots +b_m'=c_{m+1}+\cdots +c_{m+n}+\ell.$$ It follows from$$a_1+\cdots +a_m\leq N(c_{m+1}+\cdots +c_{m+n})-1$$that$$b_1+\cdots +b_m\leq (N-1)(c_{m+1}+\cdots +c_{m+n})-\ell-1.$$Set$$D:=\{i\mid i\in [m], a_i\leq (N-1)c_i+1\}$$and set $D':=[m]\setminus D$. The following two cases arise.

\smallskip

{\bf (Case 1.)} Suppose that$$\sum_{i\in D}(a_i-b_i-1)+\sum_{i\in D'}[(N-1)c_i-b_i]\geq\ell+1.$$For each $i\in [m]$, one can choose an integer $d_i\geq b_i$ for which $d_i\leq a_i-1$ if $i\in D$ and $d_i\leq (N-1)c_i$ if $i\in D'$, such that $$\sum_{i=1}^{m}d_i=\sum_{i=1}^{m}b_i+\ell+1.$$Set $d_j:=b_j$ for each $j\in [m+n]\setminus [m]$.  Let $\mathfrak{d}=(d_1, \ldots, d_{m+n})$. Since$$b_1+\cdots +b_m\leq (N-1)(c_{m+1}+\cdots +c_{m+n})-\ell-1,$$one has $\mathfrak{d}\in (N-1)\Pc$ and $\mathfrak{d'}:=\mathfrak{a}-\mathfrak{d}\in \Pc\setminus\partial\Pc$. Since $\mathfrak{a}=\mathfrak{d}+\mathfrak{d'}$, we are done.

\smallskip

{\bf (Case 2.)} Suppose that$$\sum_{i\in D}(a_i-b_i-1)+\sum_{i\in D'}[(N-1)c_i-b_i]\leq\ell.$$Then
\begin{align*}
& \,-\sum_{i=1}^{m}b_i'+\sum_{i\in D}(a_i-b_i-1)+\sum_{i\in D'}[(N-1)c_i-b_i]\\  \leq&\,\ell-\sum_{i=1}^{m}b_i'
\\
=&\,-(c_{m+1}+\cdots +c_{m+n}).    \end{align*}
Since $b_i+b_i'=a_i$, we conclude that$$-|D|+\sum_{i\in D'}[(N-1)c_i-a_i]\leq -(c_{m+1}+\cdots +c_{m+n}).$$In other words,
\begin{eqnarray}
\label{6}
c_{m+1}+\cdots +c_{m+n}\leq |D|+\sum_{i\in D'}[a_i-(N-1)c_i].
\end{eqnarray}
Since $D\sqcup D'=[m]$, it follows that
\begin{align*}
c_{m+1}+\cdots +c_{m+n} \leq & \, |D|+\sum_{i\in D'}(c_i-1)\\ =& \, |D|+\sum_{i=1}^{m}c_i-\sum_{i\in D}c_i-m+|D|\\  =& \, 2|D|+\sum_{i=1}^mc_i-\sum_{i\in D}c_i-m.
\end{align*}
Furthermore, since $$a_1+\cdots +a_m\leq N(c_{m+1}+\cdots +c_{m+n})-1,$$ by using (\ref{6}), one has
\begin{align*}
c_{m+1}+\cdots +c_{m+n} \leq &\,|D|+\sum_{i=1}^ma_i-\sum_{i\in D}a_i-(N-1)\sum_{i\in D'}c_i\\  \leq &\,|D|+N(c_{m+1}+\cdots +c_{m+n})-1-|D|-(N-1)\sum_{i\in D'}c_i\\  = &\,N(c_{m+1}+\cdots +c_{m+n})-1-(N-1)\sum_{i\in D'}c_i.   
\end{align*} 
Therefore,$$(N-1)\sum_{i\in D'}c_i\leq (N-1)(c_{m+1}+\cdots +c_{m+n})-1,$$and $$\sum_{i\in D'}c_i<c_{m+1}+\cdots +c_{m+n}.$$  The above inequality, in particular shows that $D'$ is a proper subset of $[m]$. Thus, $D\neq\emptyset$. Consequently, $D$ is a subset which denies  condition (ii). \, \, \, \, \, \, \, \, \, 
\end{proof}

\section{Convex polytopes of Veronese type}
In the present section, as a corollary of Theorem \ref{compbip-level}, we obtain a numerical characterization for a convex polytope of Veronese type to be level* (Theorem \ref{veronese_criterion}).

\begin{Definition}
\label{veronese_type}
{\em
Let $(a,\mathfrak{c}) = (a, c_1, \ldots, c_{n})\in (\ZZ_{>0})^{n+1}$ be a sequence for which $a > c_1 \geq \cdots \geq c_{n} \geq 2, a \geq n+1$ and $a < c_1+\cdots + c_{n}$.  The convex polytope $\Qc^\sharp_{(a; \mathfrak{c})} \subset \RR^{n}$ which is defined by the system of linear inequalities 

$\bullet$ $0\leq x_i\leq c_i, \, \, \, \, \, i=1\ldots, n$;

$\bullet$ $x_1+\cdots +x_{n}\leq a$

\noindent
is called a convex polytope {\em of Veronese type}.
}
\end{Definition}

Following the notation in Definition \ref{veronese_type}, we focus on a star graph $G=K_{1,n}$ on $[n+1]$ whose edges are $\{1,2\}, \ldots, \{1,n+1\}$ and $\mathfrak{c}=(c_2, \ldots, c_{n+1})\in (\ZZ_{>0})^{n}$.  It follows from Lemma \ref{compbip-poly1} that $\conv(\Dc(G,(a,\mathfrak{c})) \subset \RR^{n+1}$ is defined by the system of linear inequalities 

$\bullet$ $0\leq x_1\leq a$;

$\bullet$ $0\leq x_i\leq c_i, \, \, \, \, \, i=2\ldots, n+1$;

$\bullet$ $x_2+\cdots +x_{n+1}\leq a$.

\noindent
In particular, $\conv(\Dc(G,\mathfrak{c}))$ is a prism over $\Qc^\sharp_{(a;\mathfrak{c})}$.  Since $\conv(\Dc(G,\mathfrak{c}))$ is normal, it follows that every convex polytope of Veronese type (is a lattice polytope and) is normal.  We easily see that up to a transition, $\Qc^\sharp_{(a; \mathfrak{c})}$ is reflexive if and only if $c_i=2$ for each $1 \leq i \leq n$ and $a=n+1$.  Furthermore, $\Qc^\sharp_{(a, \mathfrak{c})}$ is pseido-Gorenstein* if and only if either $c_i =2$ for each $1 \leq i \leq n$ or $a=n+1$.  On the other hand, since $\conv(\Dc(G,\mathfrak{c}))$ is a prism over $\Qc^\sharp_{(a;\mathfrak{c})}$, it follows that $\Qc^\sharp_{(a, \mathfrak{c})}$ is level* if and only if $\conv(\Dc(G,\mathfrak{c}))$ is level*.  Thus as a corollary of Theorem \ref{compbip-level}, one can obtain a numerical characterization for a convex polytope of Veronese type to be level*. 

\begin{Theorem}
 \label{veronese_criterion} 
A lattice polytope of Veronese type $\Qc^\sharp_{(a; \mathfrak{c})} \subset \RR^{n}$ of Definition \ref{veronese_type} is level* if and only if every subset $\emptyset \neq X\subset [n]$ satisfies neither
\begin{eqnarray*}
\label{aaaaa}
a<\sum_{i\in X}c_i< a-n+2|X|-1    
\end{eqnarray*}
nor
\begin{eqnarray*}
\label{bbbbb}
    \sum_{i=1}^{n}c_i-\sum_{i\in X}c_i < a\leq 2|X|+\sum_{i=1}^{n}c_i-\sum_{i\in X}c_i-n.
\end{eqnarray*}  
\end{Theorem}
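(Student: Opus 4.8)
The plan is to deduce Theorem \ref{veronese_criterion} directly from Theorem \ref{compbip-level} by specializing to the star graph $G=K_{1,n}$ and exploiting the prism structure observed in the discussion preceding the statement. Concretely, I would set $m=1$ in the bipartite set-up, so that the "$[m]$-side" of the vertex partition is the single vertex carrying weight $a$, while the "$n$-side" carries the weights $c_1, \ldots, c_n$. With this dictionary the hypotheses of Theorem \ref{compbip-level} become: $c_1+\cdots+c_n$ plays the role of $c_1+\cdots+c_m$ in the theorem — wait, I must be careful, since in Theorem \ref{compbip-level} the inequality runs $c_1+\cdots+c_m > c_{m+1}+\cdots+c_{m+n}$, i.e. the "$[m]$-side sum" dominates. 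So I would instead put the $c_i$'s on the $[m]$-side (taking $m=n$ there) and the single vertex of weight $a$ on the other side (so "$n=1$" in the theorem's notation). Then the running hypotheses $c_1+\cdots+c_n > a \geq n+1$ and $c_i \leq a$ for each $i$ match exactly the defining conditions of a Veronese-type polytope in Definition \ref{veronese_type} (note $a>c_1\geq\cdots\geq c_n\geq 2$ gives $c_i\leq a$, indeed $c_i<a$, so the set $A=\{i : c_i=a\}$ is empty and $B=[n]$), and the requirement $c_i\geq 2$ for $i\in B$ is built into the definition.

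The key step is then just translating conditions (i) and (ii) of Theorem \ref{compbip-level} under this substitution. With $A=\emptyset$, $B=[n]$, and the single "other-side" weight equal to $a$, the sum $c_{m+1}+\cdots+c_{m+n}$ becomes $a$, and $\sum_{i=1}^{m}c_i$ becomes $\sum_{i=1}^{n}c_i$, while "$m$" in the theorem becomes "$n$". Thus condition (i) reads: there is no $\emptyset\neq X\subset[n]$ with $a < \sum_{i\in X}c_i < a - n + 2|X| - 1$, which is precisely the first forbidden inequality in Theorem \ref{veronese_criterion}. Condition (ii) reads: there is no $\emptyset\neq X\subset[n]$ with $\sum_{i=1}^{n}c_i - \sum_{i\in X}c_i < a \leq 2|X| + \sum_{i=1}^{n}c_i - \sum_{i\in X}c_i - n$, which is exactly the second forbidden inequality. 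Hence Theorem \ref{compbip-level} gives: $\conv(\Dc(K_{1,n},(a,\mathfrak c)))$ is level* if and only if neither forbidden inequality admits a solution $X$.

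Finally I would invoke the prism reduction already recorded in the text: $\conv(\Dc(K_{1,n},(a,\mathfrak c)))$ is the prism $[0,a]\times\Qc^\sharp_{(a;\mathfrak c)}$ over the Veronese-type polytope, and a prism over a normal polytope $\Pc$ (with a segment of lattice length $\geq 1$) is level* if and only if $\Pc$ is level* — this equivalence is asserted in the paragraph before the statement, so I may use it. Combining this with the characterization from the previous paragraph yields that $\Qc^\sharp_{(a;\mathfrak c)}$ is level* if and only if no $\emptyset\neq X\subset[n]$ satisfies either forbidden inequality, which is the claim.

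The main obstacle is essentially bookkeeping: one must be vigilant that the two sides of the bipartite partition are assigned correctly so that the dominance inequality $c_1+\cdots+c_m > c_{m+1}+\cdots+c_{m+n}$ of Theorem \ref{compbip-level} points the right way (the $c_i$'s of the Veronese polytope must sit on the "$[m]$" side with $m=n$, and the lone weight $a$ on the "$n=1$" side), and that the edge-case verification $A=\emptyset$, $B=[n]$, together with $a\geq n+1$ and $a\leq$ the relevant sum, genuinely falls inside the hypothesis range of Theorem \ref{compbip-level} rather than the trivial cube case of Lemma \ref{compbip-poly1}. Since Definition \ref{veronese_type} imposes $a<c_1+\cdots+c_n$ strictly, the cube case $c_1+\cdots+c_m=c_{m+1}+\cdots+c_{m+n}$ is excluded, so there is no degenerate overlap to worry about. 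Once the translation dictionary is fixed, everything is a direct substitution, and no further estimation is needed.
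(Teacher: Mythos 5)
Your proposal is correct and follows exactly the route the paper takes: specialize Theorem~\ref{compbip-level} to the star $K_{1,n}$ with the Veronese $c_i$'s on the dominant $[m]$-side (so $m=n$, $A=\emptyset$, $B=[n]$) and the single weight $a$ on the other side, then pass through the prism observation $\conv(\Dc(K_{1,n},(a,\mathfrak c)))\cong[0,a]\times\Qc^\sharp_{(a;\mathfrak c)}$ recorded just before the statement. The bookkeeping you describe (matching $a\geq n+1$ to $c_{m+1}+\cdots+c_{m+n}\geq m+1$, and $a<\sum c_i$ to the strict dominance hypothesis) is exactly the verification implicit in the paper's derivation.
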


From Theorem \ref{veronese_criterion}, one can understand how difficult it is to determine whether a given normal polytope is level* or not.  

\begin{Example}
\label{veronese_COR}
{\em 
Let $(a,\mathfrak{c}) = (a, c, \ldots, c)\in (\ZZ_{>0})^{n+1}$, where $a > c \geq 2, a \geq n+1$ and $a < nc$.  Then $\Qc^\sharp_{(a, \mathfrak{c})} \subset \RR^{n}$ is level* if and only if$$a\notin \bigcup_{k=1}^n[kc+n-2k+2, kc-1]\cup [(n-k)c+1, 2k+(n-k)c-n].$$
In particular, if  
\[
a = \left\{
\begin{array}{ll}
(n/2)c + 1& \text{$(\,n$ is even\,$)$}; \\ \\
((n+1)/2)c & \text{$(\,n$ is odd\,$)$}.
\end{array}
\right.
\]
then $\Qc^\sharp_{(a, \mathfrak{c})}$ is level*.  If $c=2$, then $\Qc^\sharp_{(a, \mathfrak{c})}$ is level* if and only if $a = n+1$.
}
\end{Example}

\begin{Example}
{\em
If $\mathfrak{c} = (3,3,2,2,2)$, then there is {\em no} $6 \leq a < 12$ for which $\Qc^\sharp_{(a, \mathfrak{c})}$ is level*.  
}
\end{Example}

Let $\Pc \subset \RR^n$ be a normal polytope of dimension $n$ with $(\Pc \setminus \partial \Pc) \cap \ZZ^n \neq \emptyset$.  Given $\ab \in N(\Pc \setminus \partial \Pc) \cap \ZZ^n, N=1,2,\ldots\,$, the {\em reduced} $\Pc$-{\em degree} of $\ab$ is defined to be the smallest integer $r \geq 1$ for which there is $\ab_0 \in r(\Pc \setminus \partial \Pc) \cap \ZZ^n$ together with $\ab' \in (N-r)\Pc \cap \ZZ^n$ for which $\ab = \ab_0 + \ab'$.  It is shown \cite[Theorem 4.2]{BH} that the {\em reduced} $\Pc$-{\em degree} of $\ab \in N(\Pc \setminus \partial \Pc) \cap \ZZ^n$ is at most $n-1$.  The {\em int* degree} \cite[p.~222]{BH} of $\Pc$ is defined to be the biggest reduced $\Pc$-degree of $\ab \in N(\Pc \setminus \partial \Pc) \cap \ZZ^n$ with $N \geq 1$.  In particular, $\Pc$ is level* if and only if the int* degree of $\Pc$ is equal to $1$.

\begin{Example}
{\em Let $\mathfrak{c} = (5,3,3,3) \in (\ZZ_{>0})^{4}$.  Since $\Qc^\sharp_{(6; \mathfrak{c})}$ is not level (Theorem \ref{veronese_criterion}), the int* degree of $\Qc^\sharp_{(6; \mathfrak{c})}$ is at least $2$.  Furthermore, its int* degree is at most $3$ (\cite[Theorem 4.2]{BH}).  One easily see that the reduced $\Qc^\sharp_{(6; \mathfrak{c})}$-degree of $(8,1,1,1)$ is $2$ and that of $(14,1,1,1)$ is $3$.  Hence the int* degree of $\Qc^\sharp_{(6; \mathfrak{c})}$ is $3$.
    }
\end{Example}

\begin{Question}
{\em Given integers $2 \leq d \leq n$, is there a convex polytope of Veronese type of dimension $n$ whose int* degree is $d-1$?
    }
\end{Question}

\begin{Conjecture}
\label{int*_degree_conjecture}
{\em If the int* degree of $\Pc$ is $d$, then for each $1 \leq i < d$, there is $\ab \in N(\Pc \setminus \partial \Pc) \cap \ZZ^n$ whose reduced $\Pc$-degree is $i$.} 
\end{Conjecture}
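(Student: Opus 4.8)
The plan toward Conjecture~\ref{int*_degree_conjecture} is to reinterpret reduced degrees as generator degrees of an explicit combinatorial module and to reduce the assertion to a ``persistence'' property of these degrees. First I would pass to the cone over $\Pc$: put
\[
\Pi=\{(\ab,N)\in\ZZ^n\times\ZZ_{\ge 0} : \ab\in N\Pc\},
\]
a normal affine monoid of rank $n+1$ generated in degree $N=1$ (so $R=K[\Pi]$ is a standard graded normal, hence Cohen--Macaulay, $K$-algebra). Let $\Pi^{\circ}$ be the set of $(\ab,N)$ with $N\ge 1$ and $\ab\in N(\Pc\setminus\partial\Pc)\cap\ZZ^n$; adding a lattice point of the cone to an interior point leaves it interior, so $\Pi^{\circ}$ is a $\Pi$-module, namely the canonical module $\omega_R$. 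Write $(\bb,r)\preceq(\ab,N)$ when $(\ab,N)-(\bb,r)\in\Pi$. The elementary first step is: the reduced $\Pc$-degree of $\ab\in N(\Pc\setminus\partial\Pc)\cap\ZZ^n$ equals $\min\{r:(\bb,r)\in\Pi^{\circ},\ (\bb,r)\preceq(\ab,N)\}$, and such a $(\bb,r)$ of minimal level $r$ is necessarily a minimal generator of the $\Pi$-module $\Pi^{\circ}$. Hence the set of reduced $\Pc$-degrees that occur is \emph{exactly} the set of degrees of the minimal generators of $\Pi^{\circ}=\omega_R$. In this language the int* degree $d$ is the top generator degree of $\omega_R$, the hypothesis $(\Pc\setminus\partial\Pc)\cap\ZZ^n\ne\emptyset$ says that $1$ is a generator degree, and Conjecture~\ref{int*_degree_conjecture} is equivalent to: the generator degrees of $\omega_R$ form the interval $\{1,2,\ldots,d\}$.

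Next I would localize the problem. For $k\ge 1$ let $M^{(k)}\subseteq\Pi^{\circ}$ be the $\Pi$-submodule generated by the interior lattice points of level at most $k$, so $0=M^{(0)}\subseteq M^{(1)}\subseteq\cdots\subseteq M^{(d)}=\Pi^{\circ}$; one checks directly that $M^{(k-1)}\subsetneq M^{(k)}$ holds if and only if $k$ is a generator degree of $\omega_R$. Since we already know $M^{(0)}\subsetneq M^{(1)}$ and $M^{(d-1)}\subsetneq M^{(d)}$, Conjecture~\ref{int*_degree_conjecture} would follow from the persistence statement: \emph{if $M^{(k-1)}=M^{(k)}$ for some $1\le k<d$, then $M^{(k)}=M^{(k+1)}$} -- that is, if $\omega_R$ has no minimal generator at level $k$, then it has none at level $k+1$.

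To attack persistence I would exploit the polyhedral rigidity of a minimal generator. Write $\Pc=\{\xb:\ell_F(\xb)\le c_F\text{ for every facet }F\}$, so $N\Pc=\{\xb:\ell_F(\xb)\le Nc_F\}$ and the interior of $N\Pc$ is cut out by the strict inequalities. Suppose $\bb$ is a minimal generator of $\omega_R$ at level $k+1$; by normality of $\Pi$ write $\bb=v_1+\cdots+v_{k+1}$ with each $v_i\in\Pc\cap\ZZ^n$. Minimality forces each $\bb-v_j$ (a lattice point of $k\Pc$) to lie on $\partial(k\Pc)$, say $\ell_{F_j}(\bb-v_j)=kc_{F_j}$; since the $k$ numbers $\ell_{F_j}(v_i)$ with $i\ne j$ are each at most $c_{F_j}$ and sum to $kc_{F_j}$, every such $v_i$ lies on the facet $F_j$, whereas $v_j$ does not (because $\bb$ is interior). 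Hence $F_1,\ldots,F_{k+1}$ are pairwise distinct and $v_i\in\bigcap_{j\ne i}F_j$ for each $i$. The idea is then to combine this rigidity with the hypothesis $M^{(k-1)}=M^{(k)}$ -- which says that every interior lattice point of $k\Pc$ already decomposes through levels $<k$ -- to manufacture a minimal generator of $\omega_R$ at level $k$: concretely, to modify $\bb$ along one of the faces $\bigcap_{j\ne i}F_j$ (itself a normal polytope) by replacing the corresponding $v_i$ with a suitable lattice point of that face, while controlling how the defining inequalities of $k\Pc$ and $(k-1)\Pc$ change under the substitution.

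The hard part, and the reason the statement remains conjectural, is precisely this last step. In a normal affine monoid divisibility is not ``prime'': $v\preceq x+y$ does not yield $v\preceq x$ or $v\preceq y$, so there is no formal device converting a generator of $\omega_R$ at level $k+1$ into one at level $k$; equivalently, for a general Cohen--Macaulay graded ring the nonzero graded Betti numbers in the last column of the minimal free resolution need not occupy consecutive degrees, so any proof must genuinely use the geometry of $\Pc$ (normality together with its facet structure), not merely the Cohen--Macaulay property of $R$. It therefore seems prudent to settle the conjecture first for the families studied here -- the lattice polytopes of Veronese type and the $\conv(\Dc(K_{m,n},\mathfrak{c}))$ -- where Definition~\ref{veronese_type} and Lemma~\ref{compbip-poly2} exhibit the facet inequalities explicitly and the rigidity argument above should become effective, and then to look for a uniform argument.
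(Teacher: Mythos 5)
The statement you are treating is Conjecture~\ref{int*_degree_conjecture}: the paper does not prove it, and offers only a single family for which it is known to hold (Example~\ref{veronese_EX}, via \cite{BH}). Your proposal, as you say yourself, is also not a proof, so the honest verdict is that the statement remains unestablished. Your preparatory reductions are nevertheless correct. The identification of the set of reduced $\Pc$-degrees with the set of degrees of minimal generators of the interior module $\Pi^{\circ}=\omega_R$ checks out in both directions (the key point being that the level-zero part of $\Pi$ is trivial, so any nontrivial factorization of an interior point strictly lowers the level); the reformulation of the conjecture as ``the generator degrees of $\omega_R$ form the interval $\{1,\ldots,d\}$'' is accurate, with degree $1$ guaranteed by $(\Pc\setminus\partial\Pc)\cap\ZZ^n\neq\emptyset$ and degree $d$ by the definition of the int* degree; the reduction to the persistence statement via the filtration $M^{(k)}$ is sound; and the facet-rigidity analysis of a minimal generator $\bb=v_1+\cdots+v_{k+1}$ (distinct facets $F_1,\ldots,F_{k+1}$ with $v_i\in\bigcap_{j\neq i}F_j$ and $v_j\notin F_j$) is correct and is essentially the mechanism behind the bound on the reduced degree in \cite{BH}.

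The genuine gap is exactly the one you name: the persistence step, i.e.\ producing a minimal generator of $\omega_R$ at level $k$ from one at level $k+1$ under the hypothesis $M^{(k-1)}=M^{(k)}$. Your rigidity data constrains a level-$(k+1)$ generator but gives no mechanism for descending a level, and, as you correctly observe, no formal argument (normality of the monoid, Cohen--Macaulayness of $R$) can supply one, since generator degrees of canonical modules of Cohen--Macaulay graded rings need not be consecutive in general. So the proposal is a reasonable reduction and a fair assessment of the difficulty, not a proof; your suggestion to first test persistence on the polytopes $\Qc^{\sharp}_{(a;\mathfrak{c})}$ and $\conv(\Dc(K_{m,n},\mathfrak{c}))$, where Theorem~\ref{veronese_criterion} and Lemma~\ref{compbip-poly2} make the facet inequalities explicit, is the natural next step.
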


Conjecture \ref{int*_degree_conjecture} arose during the preparation of \cite{BH}.

\begin{Example}
\label{veronese_EX}
{\em
Let $\mathfrak{c} = (n, 2, \ldots, 2) \in (\ZZ_{>0})^{n}$.  It is shown \cite[Example 5.2]{BH} that the int* degree of $\Qc^\sharp_{(n+1; \mathfrak{c})}$ is $n-1$.  In other words, $\Qc^\sharp_{(n+1; \mathfrak{c})}$ is far from being level*.  Furthermore, Conjecture \ref{int*_degree_conjecture} is true for $\Qc^\sharp_{(n+1; \mathfrak{c})}$.  
}
\end{Example}

\section{Labeling pseudo-Gorenstein* trees}
Recall that a finite graph $G$ on $n$ vertices is labeling pseudo-Gorenstein* if there is $\mathfrak{c} \in (\ZZ_{>0})^{n}$ for which $\conv(\Dc(G,\mathfrak{c})) \subset \RR^n$ is pseudo-Gorenstein*.  In the present section, we classify labeling pseudo-Gorenstein* trees.  

\begin{Lemma} \label{tree1}
If a tree $T$ has two leaves $x_1, x_2$ with ${\rm dist}(x_1,x_2)=2$, then $T$ is not labeling pseudo-Gorenstein*.
\end{Lemma}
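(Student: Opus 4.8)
The plan is to fix an arbitrary $\mathfrak{c}\in(\ZZ_{>0})^{n}$, where $n=|V(T)|$, set $\delta=\delta_{\mathfrak{c}}(I(T))$ and $\Pc=\conv(\Dc(T,\mathfrak{c}))$, and show that $\Pc$ is not pseudo-Gorenstein*. Write $z$ for the common neighbour of the leaves $x_1,x_2$, and $\rho=\rho_{(T,\mathfrak{c})}$.

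First I would describe $\Bc(T,\mathfrak{c})$. Since any monomial of $I(T)^{\delta}$ is divisible by a product of $\delta$ edges of $T$, and dividing a minimal generator of $(I(T)^{\delta})_{\mathfrak{c}}$ by any surplus variable would again give a $\mathfrak{c}$-bounded element of $I(T)^{\delta}$ properly dividing it, every $u\in\Bc(T,\mathfrak{c})$ is a product $\prod_{e\in E(T)}e^{\mu_e(u)}$ of exactly $\delta$ edges. Writing $\nu_v(u)$ for the exponent of $x_v$ in $u$, the leaves $x_1,x_2$ being joined only to $z$ give $\nu_{x_1}(u)=\mu_{\{x_1,z\}}(u)$, $\nu_{x_2}(u)=\mu_{\{x_2,z\}}(u)$ and $\nu_z(u)\ge\nu_{x_1}(u)+\nu_{x_2}(u)$; hence $\rho(\{x_1,x_2\})\le\rho(\{z\})\le c_z$. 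I would also use the standard facts that $\Dc(T,\mathfrak{c})=\Pc\cap\ZZ^{n}$ (as $\Dc(T,\mathfrak{c})$ is a discrete polymatroid), that $\Pc$ is full-dimensional (it contains $0,\eb_1,\dots,\eb_n$), and that $\sum_{i\in A}x_i\le\rho(A)$ is valid on $\Pc$ for every $A$ (for $A$ $\rho$-closed and $\rho$-inseparable these are exactly the non-coordinate facets, by Lemma \ref{facets}); by full-dimensionality every lattice point in the interior of $\Pc$ satisfies all of these, and all facet inequalities, strictly.

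Now suppose, for contradiction, that $\Pc$ is pseudo-Gorenstein* with unique interior lattice point $\ab^{\ast}$. If some $a^{\ast}_v\ge 2$, then $\ab^{\ast}-\eb_v$ again lies in $\Pc$ and is again interior (its $v$-th coordinate stays $\ge 1$, the others are unchanged, and every facet inequality is only relaxed or untouched), contradicting uniqueness; hence $\ab^{\ast}=\mathbf 1:=(1,\dots,1)$. As $\mathbf 1\in\Dc(T,\mathfrak{c})$, it is dominated by some $u^{\ast}\in\Bc(T,\mathfrak{c})$, so $\nu_v(u^{\ast})\ge 1$ for every $v$ and $\nu_z(u^{\ast})\ge\nu_{x_1}(u^{\ast})+\nu_{x_2}(u^{\ast})\ge 2$. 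Because $\mathbf 1$ is interior and $x_1+x_2\le\rho(\{x_1,x_2\})$ is valid, $\rho(\{x_1,x_2\})\ge 3$, hence $\rho(\{z\})\ge 3$: some generator has $z$-exponent at least $3$.

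The hard part is to exhibit a second interior lattice point; I propose $\mathbf 1+\eb_z$. By Lemma \ref{facets} this point is interior as soon as $\mathbf 1+2\eb_z\in\Pc$, i.e. as soon as $\rho(A)\ge|A|+2$ for every $\rho$-closed, $\rho$-inseparable $A$ containing $z$ (for $A\not\ni z$ one already has $|A|<\rho(A)$ from $\mathbf 1$ being interior). If $\nu_z(u^{\ast})\ge 3$ this is automatic, because $\rho(A)\ge\sum_{v\in A}\nu_v(u^{\ast})\ge(|A|-1)+\nu_z(u^{\ast})\ge|A|+2$. So the remaining case is that every generator meeting all vertices has $z$-exponent exactly $2$ (otherwise replace $u^{\ast}$ by one with larger $z$-exponent); then $\nu_{x_1}(u^{\ast})=\nu_{x_2}(u^{\ast})=1$ and $u^{\ast}$ uses no edge at $z$ besides $\{x_1,z\}$ and $\{x_2,z\}$. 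If $z$ has no other neighbour, then $T=K_{1,2}$ and Theorem \ref{complete_bipartite} already shows that $T$ is not labeling pseudo-Gorenstein*. Otherwise I would start from a generator with $z$-exponent $\ge 3$ and apply the exchange property of the discrete polymatroid $\Dc(T,\mathfrak{c})$ to move one unit of $z$-exponent into $u^{\ast}$; the delicate point — and the main obstacle — is that a single exchange may lower a coordinate that $u^{\ast}$ meets only once, so the exchanges have to be iterated along the path in $T$ from $z$ to such a vertex (or else the argument reduced to the subtree $K_{1,2}$ on $\{x_1,z,x_2\}$) until a generator is produced that meets every vertex and still has $z$-exponent $\ge 3$. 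With such a generator the computation of the previous case gives $\mathbf 1+2\eb_z\in\Pc$, whence $\mathbf 1+\eb_z$ is a second interior lattice point, contradicting the pseudo-Gorenstein* hypothesis on $\Pc$ and completing the proof.
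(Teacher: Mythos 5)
Your framework differs from the paper's and is arguably cleaner: the paper splits into three cases according to whether $c_i\le d_i$, $d_i<c_i\le\sum c_j$, or $c_i>\sum c_j$ (where $d_i$ is the number of leaves adjacent to $x_i$ and the sum runs over those leaves), whereas you first pin down the unique interior lattice point as $\mathbf 1$ and then aim to exhibit $\mathbf 1+\eb_z$ as a second one. That reduction, and the observation that it suffices to find a generator dominating $\mathbf 1$ with $z$-exponent at least $3$, are both correct.

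The genuine gap is exactly where you flag it: the iterated exchange ``along the path in $T$'' that is meant to handle the case $\nu_z(u^{\ast})=2$ is only sketched, and this is precisely where the paper spends most of its effort (the Claim and its path-following argument in Subcase~2.2). As written, your proof is incomplete.

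That said, the case $\nu_z(u^{\ast})=2$ is actually vacuous and no exchange argument is needed. If $\nu_z(u^{\ast})=2$ then $\nu_{x_1}(u^{\ast})=\nu_{x_2}(u^{\ast})=1$, as you note. Now consider $(zx_1)\,u^{\ast}$: it is a product of $\delta+1$ edges of $T$ whose exponent vector is still $\mathfrak{c}$-bounded, since its $z$-exponent becomes $3\le c_z$ (you already derived $c_z\ge\rho(\{z\})\ge 3$), its $x_1$-exponent becomes $2\le c_1$ (from $\mathbf 1$ being interior and $x_1\le c_1$ being a valid inequality), and every other exponent is unchanged. Hence $(zx_1)\,u^{\ast}\in(I(T)^{\delta+1})_{\mathfrak{c}}\neq(0)$, contradicting the maximality of $\delta=\delta_{\mathfrak{c}}(I(T))$. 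So $\nu_z(u^{\ast})\ge 3$ always, $\mathbf 1+2\eb_z$ is dominated by the exponent vector of $u^{\ast}$ and hence lies in $\Pc$, and $\mathbf 1+\eb_z$ is a second interior lattice point; the $K_{1,2}$ side-case is absorbed as well, with no appeal to Theorem~\ref{complete_bipartite}.
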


\begin{proof}
Let $V(T)=\{x_1, \ldots, x_n\}$ denote the vertex set of $T$ and suppose that  $T$ is labeling pseudo-Gorenstein*.  One has $\mathfrak{c} = (c_1, c_2, \ldots, c_n) \in (\ZZ_{>0})^n$ for which $\conv(\Dc(T,\mathfrak{c}))$ is pseudo-Gorenstein*.  Set $\Pc :=\conv(\Dc(T,\mathfrak{c}))$. First, since $(\Pc \setminus \partial \Pc) \cap \ZZ^n \neq \emptyset$, it follows that $c_i\geq 2$, for any integer $i\in n$. For each $x_i\in V(T)$,  write $d_i$ for the number of leaves of $T$ which are adjacent to $x_i$.  In addition, set $U:=\{i\in [n] \mid d_i\geq 2\}$.  Thus, $U\neq\emptyset$.  We consider the following two cases. 

\medskip

{\bf Case 1.} Suppose that there is $i\in U$ with $c_i\leq d_i$.  Let $x_1, x_2, \ldots, x_{d_i}$ denote the leaves of $T$ which are adjacent to $x_i$. Then $\rho_{(T,\mathfrak{c})}([d_i])=c_i$. We show that $[d_i]$ is a $\rho_{(T,\mathfrak{c})}$-closed and $\rho_{(T,\mathfrak{c})}$-inseparable subset of $[n]$.  

To see why $[d_i]$ is $\rho_{(T,\mathfrak{c})}$-closed, we must show the inequality$$\rho_{(T,\mathfrak{c})}([d_i]\cup\{\ell\})> \rho_{(T,\mathfrak{c})}([d_i]),$$ 
for each $\ell\in [n]\setminus [d_i]$.  Let $u\in \Bc(T,\mathfrak{c})$ satisfy  $\sum_{j=1}^{d_i}{\rm deg}_{x_j}(u)=\rho_{(T,\mathfrak{c})}([d_i])$. If $u$ is divisible by $x_{\ell}$, then the inequality$$\rho_{(T,\mathfrak{c})}([d_i]\cup\{\ell\})> \rho_{(T,\mathfrak{c})}([d_i])$$trivially holds.  So, assume that $u$ is not divisible by $x_{\ell}$.  Set $\delta:=\delta_{\mathfrak{c}}(I(T))$. As $u\in (I(T)^{\delta})_{\mathfrak{c}}$, $u$ can be written as $u=e_1\cdots e_{\delta}$, where $e_1, \ldots, e_{\delta}$ are edges of $T$. There is $x_q\in N_T(x_{\ell})$ with $q\neq i$. If $x_q$ does not divide $u$, then $(x_{\ell}x_q)u\in (I(T)^{\delta+1})_{\mathfrak{c}}$, a contradiction. Thus, $x_q$ divides $u$. Let $e_1=\{x_q,x_{q'}\}$. Since $q\neq i$, one has $q'\notin [d_i]$.  Since $
ux_{\ell}/x_{q'}
=(x_{\ell}x_q)e_2\cdots e_{\delta}\in \Bc(T,\mathfrak{c})$, it follows that
\begin{eqnarray*}
\rho_{(T,\mathfrak{c})}([d_i]\cup\{\ell\}) 
& \geq & \sum_{j=1}^{d_i}{\rm deg}_{x_j}(ux_{\ell}/x_{q'})+{\rm deg}_{x_{\ell}}(ux_{\ell}/x_{q'})  \\
& > & \sum_{j=1}^{d_i}{\rm deg}_{x_j}(ux_{\ell}/x_{q'})= \sum_{j=1}^{d_i}{\rm deg}_{x_j}(u)
=\rho_{(T,\mathfrak{c})}([d_i]).
\end{eqnarray*}
Consequently, $[d_i]$ is $\rho_{(T,\mathfrak{c})}$-closed, as desired.

To see why $[d_i]$ is $\rho_{(T,\mathfrak{c})}$-inseparable, suppose that $A_1$ and $A_2$ are nonempty subsets of $[d_i]$ with $A_1 \cap A_2 =\emptyset$ and $A_1\cup A_2=[d_i]$.  If $\sum_{j\in A_1}c_j\geq c_i$, then $\rho_{(T,\mathfrak{c})}(A_1)=c_i$.  Therefore,
\[
\rho_{(T,\mathfrak{c})}(A_1)+\rho_{(T,\mathfrak{c})}(A_2) > c_i=\rho_{(T,\mathfrak{c})}([d_i]).
\]
The argument in the case $\sum_{j\in A_2}c_j\geq c_i$ is similar.  Supposing that $\sum_{i\in A_1}c_i< c_1$ and $\sum_{i\in A_2}c_i < c_1$, one has $\rho_{(T,\mathfrak{c})}(A_k)=\sum_{j\in A_k}c_j$ for $k=1,2$.  Thus, 
$$\rho_{(T,\mathfrak{c})}(A_1)+\rho_{(T,\mathfrak{c})}(A_2)=\sum_{i=1}^{d_i}c_i>d_i\geq c_i=\rho_{(T,\mathfrak{c})}([d_i]).$$   
Hence, $[d_i]$ is $\rho_{(T,\mathfrak{c})}$-inseparable, as desired. 

We conclude from Lemma \ref{facets} that the inequality $x_1+\cdots +x_{d_i}\leq c_i$ is one of the defining inequalities of $\Pc$. Since $\Pc$ is pseudo-Gorenstein*, it follows from this inequality that $c_i>d_i$, contradicting our assumption in this case.

\medskip

{\bf Case 2.} Suppose $c_i>d_i$ for each $i\in U$. 

\smallskip

\noindent
{\bf (Subcase 2.1.)} Suppose that there is $i\in U$ with $d_i<c_i\leq \sum c_j$, where the sum is taken over all $j\in [n]$ for which $x_j$ is a leaf adjacent to $x_i$.  We denote these vertices by $x_1, \ldots, x_{d_i}$. Hence, $d_i<c_i\leq \sum_{j=1}^{d_j}c_j$. Then every monomial belonging to $\Bc(T,\mathfrak{c})$ is of the form $x_i^{c_i}w$, where $w$ is a monomial which is not divisible by $x_i$. Consequently, the singleton $\{i\}$ is the unique $\rho_{(T,\mathfrak{c})}$-closed and $\rho_{(T,\mathfrak{c})}$-inseparable subset of $[n]$ which contains $i$. Furthermore, $\rho_{(T,\mathfrak{c})}(\{i\})=c_i$. Thus, the inequality $0\leq x_i\leq c_i$ is the only defining inequality of $\Pc$ which involves $x_i$. Since $(\Pc \setminus \partial \Pc) \cap \ZZ^n \neq \emptyset$, one has $(1,1, \ldots, 1)\in \Pc \setminus \partial \Pc$.  Since $c_i>d_i\geq 2$, $(2, 1,1, \ldots, 1)\in \Pc \setminus \partial \Pc$, a contradiction.

\smallskip

\noindent
{\bf (Subcase 2.2.)} Suppose that for each $i\in U$, one has $c_i> \sum c_j$, where the sum is taken over all $j\in [n]$ for which $x_j$ is a leaf adjacent to $x_i$. Choose $i\in U$ (which exists as $U\neq\emptyset$). Assume that $x_1, \ldots, x_{d_i}$ are the leaves of $T$ which are adjacent to $x_i$. Let $A$ be an arbitrary $\rho_{(T,\mathfrak{c})}$-closed and $\rho_{(T,\mathfrak{c})}$-inseparable subset of $[n]$ with $i\in A$. There is $v\in \Bc(T,\mathfrak{c})$ with $\sum_{j\in A}{\rm deg}_{x_j}(v)=\rho_{(T,\mathfrak{c})}(A)$. For any such $v$, set $A_v:=\{j\in A : x_j \ {\rm does \ not \ divide} \ v\}$.

\medskip

{\bf Claim.} If $v\in \Bc(T,\mathfrak{c})$ with $\sum_{j\in A}{\rm deg}_{x_j}(v)=\rho_{(T,\mathfrak{c})}(A)$ sasisties $A_v\neq \emptyset$, then there is $v'\in \Bc(T,\mathfrak{c})$ with $\sum_{j\in A}{\rm deg}_{x_j}(v')=\rho_{(T,\mathfrak{c})}(A)$ and with $|A_{v'}|\leq |A_v|-1$.

\medskip

{\it Proof of the claim.} Since $v\in \Bc(T,\mathfrak{c})$, one has $v=f_1\cdots f_{\delta}$, where $f_1, \ldots, f_{\delta}$ are edges of $T$. Choose $\ell\in A_v$ (which exists as $A_v\neq\emptyset$) and let $x_{\ell'}\in N_T(x_{\ell})$. If ${\rm deg}_{x_{\ell'}}(v) <c_{\ell'}$, then $(x_{\ell}x_{\ell'})v\in (I(T)^{\delta+1})_{\mathfrak{c}}$, a contradiction. Thus, ${\rm deg}_{x_{\ell'}}(v) =c_{\ell'}\geq 2$. Set $\ell_0:=\ell'$. Assume that $f_1, \ldots, f_{c_{\ell'}}$ are incident to $x_{\ell'}$ and that $f_{c_{\ell'}+1}, \ldots, f_{\delta}$ are not incident to $x_{\ell'}$. So, there are (not necessarily distinct) vertices $x_{p_1}, \ldots, x_{p_{c_{\ell'}}}\in V(T)$ with $f_k=\{x_{\ell'}, x_{p_k}\}$ for each $1 \leq k \leq c_{\ell'}$. Since $x_{\ell}$ does not divide $v$, one has $x_{p_k}\neq x_{\ell}$ for each $1 \leq k \leq c_{\ell'}$. If there is $1 \leq k \leq c_{\ell'}$ with $p_k\notin A$, then set$$v':=\frac{x_{\ell}v}{x_{p_k}}=(x_{\ell}x_{\ell'})f_1\cdots f_{k-1}f_{k+1}\cdots  f_{\delta}\in \Bc(T, \mathfrak{c}).$$Since $\ell \in A$ and $p_k\notin A$, we deduce that$$\sum_{j\in A}{\rm deg}_{x_j}(v')> \sum_{j\in A}{\rm deg}_{x_j}(v)=\rho_{(T,\mathfrak{c})}(A),$$a contradiction. Thus $p_k\in A$ for each $1 \leq k \leq c_{\ell'}$. If there is $1 \leq k \leq c_{\ell'}$ with ${\rm deg}_{x_{p_k}}(v)\geq 2$, then  $v'$ as defined above satisfies the desired properties. Assume that ${\rm deg}_{x_{p_k}}(v)=1$, for each $1 \leq k \leq c_{\ell'}$ (so, $p_1, \ldots p_{c_{\ell'}}$ are distinct). If $x_{p_1}, \ldots, x_{p_{c_{\ell'}}}$ are leaves of $T$, then $\ell'\in U$ and $c_{\ell'}\leq \sum c_j$, where the sum is
taken over all $j\in [n]$ for which $x_j$ is a leaf of $T$ adjacent to $x_{\ell'}$. This contradicts our assumption in this subcase. So, assume that $x_{p_1}$ is not a leaf of $T$. Set $\ell_1:=p_1$. In particular, ${\rm deg}_{x_{\ell_1}}(v)=1$ and $f_1=\{x_{\ell_0}, x_{\ell_1}\}$. Since $x_{\ell_1}$ is not a leaf of $T$, there is $x_{\ell_2}\in N_T(x_{\ell_1})$ with $x_{\ell_2}\neq x_{\ell_0}$. If ${\rm deg}_{x_{\ell_2}}(v)<c_{\ell_2}$, then $(x_{\ell_1}x_{\ell_2})v\in (I(T)^{\delta+1})_{\mathfrak{c}}$, a contradiction. Thus, ${\rm deg}_{x_{\ell_2}}(v) =c_{\ell_2}$. Let $f_{r_1}, \ldots, f_{r_{c_{\ell_2}}}$ be incident to $x_{\ell_2}$. Therefore, there are (not necessarily distinct) vertices $x_{q_1}, \ldots, x_{q_{c_{\ell_2}}}\in V(T)$ for which $f_{r_k}=\{x_{\ell_2}, x_{q_k}\}$, for each $1 \leq k \leq c_{\ell_2}$. Since $x_{\ell}$ does not divide $v$, one has $x_{q_k}\neq x_{\ell}$, for each $1 \leq k \leq c_{\ell_2}$. If there is $1 \leq k \leq c_{\ell_2}$ with $q_k\notin A$, then set$$v'':=\frac{x_{\ell}v}{x_{q_k}}=(x_{\ell}x_{\ell_0})(x_{\ell_1}x_{\ell_2})\frac{f_1\cdots f_{\delta}}{f_1f_{r_k}}\in \Bc(T, \mathfrak{c}).$$Recall that $f_1=\{x_{\ell_0}, x_{\ell_1}\}$. Since $\ell \in A$ and $q_k\notin A$, we conclude that$$\sum_{j\in A}{\rm deg}_{x_j}(v'')> \sum_{j\in A}{\rm deg}_{x_j}(v)=\rho_{(T,\mathfrak{c})}(A),$$a contradiction.  Let $q_k\in A$ for each $1 \leq k \leq c_{\ell_2}$. If there is $1 \leq k \leq c_{\ell_2}$ with ${\rm deg}_{x_{q_k}}(v)\geq 2$, then  $v''$ as defined above satisfies the desired properties. Assume that ${\rm deg}_{x_{q_k}}(v)=1$ for each $1 \leq k \leq c_{\ell_2}$ (so, $q_1, \ldots q_{c_{\ell_2}}$ are distinct). If $x_{q_1}, \ldots, x_{q_{c_{\ell_2}}}$ are leaves of $T$, then $\ell_2\in U$ and $c_{\ell_2}\leq \sum c_j$, where the sum is
taken over all $j\in [n]$ for which $x_j$ is a leaf of $T$ adjacent to $x_{\ell_2}$. This contradicts our assumption in this subcase. We may assume that $x_{q_1}$ is not a leaf of $T$. Set $\ell_3:=q_1$. Continuing this process, we obtain a path $P : x_{\ell_0}, x_{\ell_1}, x_{\ell_2}, x_{\ell_3} \ldots, $ in $T$ for which

($\ast$) ${\rm deg}_{x_{\ell_j}}(v)=c_{\ell_j}$ if $j$ is even, and ${\rm deg}_{x_{\ell_j}}(v)=1$, otherwise.

\noindent
Notice that since $T$ has no cycle, the vertices of $P$ are distinct. As $|V(T)|<\infty$, this process must stop, which completes the proof of the claim. 

\medskip

Now, the claim guarantees that there is $v\in \Bc(T,\mathfrak{c})$ with $\sum_{j\in A}{\rm deg}_{x_j}(v)=\rho_{(T,\mathfrak{c})}(A)$ and $A_v=\emptyset$. Recall that $x_1, \ldots, x_{d_i}$ are the leaves of $T$
which are adjacent to $x_i$. Since ${\rm deg}_{x_i}(v)\geq c_1+\cdots +c_{d_i}\geq 4$ and since $A_v=\emptyset$, one has $$\rho_{(T,\mathfrak{c})}(A)=\sum_{j\in A}{\rm deg}_{x_j}(v)\geq |A|+3.$$
Thus, any defining inequality of $\Pc$ involving $x_i$ is of the form $\sum_{j\in A}x_j\leq t$, where $A$ is a subset of $[n]$ with $i\in A$ and $t\geq |A|+3$ is an integer.  Since $(\Pc \setminus \partial \Pc) \cap \ZZ^n \neq \emptyset$, one has $(1,1, \ldots, 1)\in \Pc \setminus \partial \Pc$.  On the other hand, it follows from our discussion that $(2,1, \ldots, 1)\in \Pc \setminus \partial \Pc$, a contradiction. 
\, \, \, \, \, \, \, \, \, \, \, \, \, \, \, \, \, \, \, \, \, \, \, \,
\end{proof}

\begin{Lemma} \label{tree2}
Let $T$ be a tree on $n\geq 2$ vertices with the property that, for any two leaves $x_1, x_2$ of $T$, one has ${\rm dist}(x_1,x_2)\neq 2$.  Then $T$ is labeling pseudo-Gorenstein*.
\end{Lemma}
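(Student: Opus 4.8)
The plan is to take $\mathfrak c=(2,2,\dots,2)\in(\ZZ_{>0})^{n}$, set $\Pc=\conv(\Dc(T,\mathfrak c))$ and $\mathbf 1=(1,\dots,1)$, and show that $\Pc$ (which is normal, as recalled above) is pseudo-Gorenstein* with the single interior lattice point $\mathbf 1$. The uniqueness half is immediate: every element of $\Bc(T,\mathfrak c)$ is a $\mathfrak c$-bounded monomial, hence $\Dc(T,\mathfrak c)\subseteq\{0,1,2\}^{n}$ and $\Pc\subseteq[0,2]^{n}$, which forces any interior lattice point of $\Pc$ to be $\mathbf 1$. So everything comes down to proving $\mathbf 1\in\Pc\setminus\partial\Pc$.

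I would first translate this into matching theory. Since $T$ is bipartite, its vertex--edge incidence matrix is totally unimodular, so $\delta:=\delta_{\mathfrak c}(I(T))$ equals the largest number of edges of $T$, counted with multiplicity, that can be chosen with every vertex meeting at most two of them; by total unimodularity this integer program has the same optimum as its relaxation, namely $2\nu(T)$ with $\nu(T)$ the matching number, and the optimum is attained by doubling a maximum matching. Viewing such a maximal edge multiset as a multigraph of maximum degree $\le2$ without odd cycles (as $T$ is acyclic), it is properly $2$-edge-colourable, hence a union of two matchings, necessarily both maximum; conversely the union of any two maximum matchings gives such a multiset. Thus $\Bc(T,\mathfrak c)$ is precisely the set of exponent vectors $\mathbf 1_{V(M_1)}+\mathbf 1_{V(M_2)}$, where $M_1,M_2$ run over maximum matchings of $T$ and $\mathbf 1_{V(M)}\in\{0,1\}^{n}$ marks the vertices covered by $M$. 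Suppose we have shown $\mathbf 1+\eb_i\in\Dc(T,\mathfrak c)$ for every $i$. Since $\Dc(T,\mathfrak c)$ is an order ideal of monomials, then $\mathbf 1\in\Dc(T,\mathfrak c)$ and $\mathbf 1-\eb_i\in\Dc(T,\mathfrak c)$ as well, so the cross-polytope $\conv\{\mathbf 1\pm\eb_i:i\in[n]\}=\{\,y\in\RR^{n}:\sum_{i}|y_i-1|\le1\,\}$ lies in $\Pc$ and has $\mathbf 1$ in its (non-empty, $n$-dimensional) interior; hence $\mathbf 1\in\Pc\setminus\partial\Pc$. By the description of $\Bc(T,\mathfrak c)$, the divisibility condition $\mathbf 1+\eb_i\le\mathbf 1_{V(M_1)}+\mathbf 1_{V(M_2)}$ says exactly that $M_1,M_2$ both cover the vertex $x_i$ and $V(M_1)\cup V(M_2)=V(T)$. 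So the whole proof reduces to: \emph{for every vertex $v$ of $T$ there are maximum matchings $M_1,M_2$, each covering $v$, with $V(M_1)\cup V(M_2)=V(T)$.}

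I would attack this matching statement through the Gallai--Edmonds decomposition $V(T)=D\sqcup A\sqcup C$. Because $T$ is bipartite, each factor-critical component of $T[D]$ is a single vertex, so $D$ is an independent set with $A=N_T(D)$; every maximum matching of $T$ matches $C$ perfectly within $C$, matches $A$ into $D$ by a matching, and exposes exactly the $|D|-|A|=\operatorname{def}(T)$ vertices of $D$ not met by that matching. (If $\operatorname{def}(T)=0$ then $T$ has a unique perfect matching, $\Bc(T,\mathfrak c)=\{(2,\dots,2)\}$, $\Pc=[0,2]^{n}$, and we are done; so assume $\operatorname{def}(T)\ge1$.) The decisive input is the inequality $\operatorname{def}(T)<|A|$, and here the hypothesis enters. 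Let $F$ be the bipartite forest whose edges are the edges of $T$ joining $A$ and $D$; every neighbour in $T$ of a vertex of $D$ lies in $A$, so $\deg_F(d)=\deg_T(d)$ for $d\in D$, and $F$, being a forest on $|A|+|D|$ vertices, has at most $|A|+|D|-1$ edges. Counting, $|E(F)|=\sum_{d\in D}\deg_T(d)\ge|L|+2(|D|-|L|)$, where $L\subseteq D$ is the set of vertices of $D$ that are leaves of $T$; hence $|L|\ge|D|-|A|+1=\operatorname{def}(T)+1$. If $\operatorname{def}(T)\ge|A|$ we would get $|L|\ge|A|+1$ leaves of $T$, each with its unique neighbour in $A$, so by pigeonhole two of them would share a neighbour in $A$, i.e.\ would be leaves at distance $2$, contradicting the hypothesis. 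Therefore $\operatorname{def}(T)\le|A|-1$.

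It remains to use $\operatorname{def}(T)\le|A|-1$, together with the fact (granted by $\eb_d\in\Dc(T,\mathfrak c)$) that every vertex of $D$ is covered by some maximum matching, to build the required $M_1,M_2$: choose a maximum matching $M_1$ covering $v$, let $R_1\subseteq D$ be its exposed set, observe $|R_1\cup(\{v\}\cap D)|\le\operatorname{def}(T)+1\le|A|$, and produce a maximum matching $M_2$ whose $A$-to-$D$ matching covers all of $R_1\cup(\{v\}\cap D)$; then $v\in V(M_1)\cap V(M_2)$ and $V(M_1)\cup V(M_2)=V(T)$. I expect this last step to be the main obstacle: it amounts to showing that in the bipartite forest $F$ an $A$-saturating matching can always be modified to additionally saturate any prescribed set of at most $|A|$ vertices of $D$, which calls for a careful alternating-path/defect-Hall argument exploiting both the forest structure of $F$ and the strict bound $\operatorname{def}(T)<|A|$. (That this fails for $T=P_3$, where $\operatorname{def}(T)=|A|=1$, is exactly why $P_3$ is not labeling pseudo-Gorenstein*.) A variant of the write-up would bypass the cross-polytope reduction and instead invoke Lemma \ref{facets} directly, establishing $\rho_{(T,\mathfrak c)}(X)>|X|$ for each $\rho_{(T,\mathfrak c)}$-closed $\rho_{(T,\mathfrak c)}$-inseparable $X\subseteq[n]$ by the same matching analysis.
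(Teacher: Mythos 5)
Your proposal attacks the lemma by a route that is genuinely different from the paper's. The paper also takes $\mathfrak c=(2,\dots,2)$, but then works entirely inside $T$: it proves (Claims~1--2 of the paper's proof) that for every nonempty $A\subseteq[n]$ there is a generator $v\in\Bc(T,\mathfrak c)$ with $\rho_{(T,\mathfrak c)}(A)=\sum_{j\in A}\deg_{x_j}(v)$, with $\prod_{j\in A}x_j\mid v$, and with some $x_j^2\mid v$ ($j\in A$); the argument is a bare-hands ``walk'' along a path $x_{\ell_0},x_{\ell_1},\dots$ in $T$ whose vertices alternate between degree $2$ and degree $1$ in $v$, which cannot terminate unless two leaves are at distance $2$. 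This yields $\rho_{(T,\mathfrak c)}(A)\ge|A|+1$ for all $A$ and hence, via Lemma~\ref{facets}, $(1,\dots,1)$ is interior. Your reinterpretation of $\Bc(T,\mathfrak c)$ as $\{\mathbf 1_{V(M_1)}+\mathbf 1_{V(M_2)}:M_1,M_2\text{ maximum matchings}\}$, the cross-polytope reduction to ``$\mathbf 1+\eb_i\in\Dc(T,\mathfrak c)$,'' and the appeal to the Gallai--Edmonds decomposition are all correct and recast the same underlying alternating-path phenomenon in established matching-theoretic language. Your leaf-count $|L|\ge\operatorname{def}(T)+1$ and the pigeonhole giving $\operatorname{def}(T)<|A|$ are also correct. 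The uniqueness argument ($\Pc\subseteq[0,2]^n$) is correct and is in fact slightly cleaner than the paper's.

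However, there is a genuine gap at the step you yourself flag as ``the main obstacle,'' and the plan you sketch for closing it does not quite work as stated. You want to fix $M_1$ covering $v$, set $S:=R_1\cup(\{v\}\cap D)$ with $|S|\le\operatorname{def}(T)+1\le|A|$, and then find an $A$-saturating matching of the forest $F$ that saturates $S$. For such a matching to exist, $S$ must be \emph{independent} in the transversal matroid of $F$ on $D$, i.e., $|N_F(S')|\ge|S'|$ for every $S'\subseteq S$. The global bound $\operatorname{def}(T)<|A|$ controls only $|S|$, not the Hall defect of $S$; nothing in the counting you have done guarantees that the exposed set $R_1$ of an arbitrary maximum matching (or $R_1\cup\{v\}$) satisfies Hall's condition. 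The statement you offer as the reduction target---``an $A$-saturating matching can be modified to saturate any prescribed set of at most $|A|$ vertices of $D$''---is false as a general principle for bipartite forests, so one really must exploit the structure of the sets $S$ that arise, or change tactics. A way to finish that stays in your framework is: (a) \emph{localize} your leaf-count to an arbitrary nonempty $Y\subseteq D$, i.e.\ use that $F[Y\cup N_F(Y)]$ is a forest together with the fact that the leaves in $Y$ have pairwise distinct $A$-neighbours, to get $|N_F(Y)|\ge\max\{|Y|-|Y\cap L|+1,\,|Y\cap L|\}\ge\lceil(|Y|+1)/2\rceil$; this yields $2r(Y)\ge|Y|+1$ for the transversal rank $r$; and then (b) invoke Edmonds' base-covering theorem (the ground set of a matroid is the union of two bases, both containing a prescribed non-coloop $v$, precisely when $2r(Z)\ge|Z|+1$ for all $Z\ni v$) to produce the two required $A$-saturating matchings, rather than trying to extend a fixed $M_1$. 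Without (a)--(b), or an equivalent, the proof is not complete.
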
  

\begin{proof}
Let $\mathfrak{c}:=(2,2 \ldots, 2) \in (\ZZ_{>0})^n$ and set $\Pc :=\conv(\Dc(T,\mathfrak{c}))$. We show that $\Pc$ is pseudo-Gorenstein*. 

\medskip

{\bf Claim 1.} If $\emptyset \neq A\subset [n]$, then there is $v\in \Bc(T,\mathfrak{c})$ with $\sum_{j\in A}{\rm deg}_{x_j}(v)=\rho_{(T,\mathfrak{c})}(A)$ for which $\prod_{j\in A}x_j$ divides $v$.

\medskip

{\it Proof of Claim 1.} Choose $v\in \Bc(T,\mathfrak{c})$ with $\sum_{j\in A}{\rm deg}_{x_j}(v)=\rho_{(T,\mathfrak{c})}(A)$ for which $$A_v:=\{j\in A : x_j \ {\rm does \ not \ divide} \ v\}$$has the smallest cardinality. We prove that $A_v=\emptyset$.

Let $A_v\neq \emptyset$ and set $\delta:=\delta_{\mathfrak{c}}(I(T))$. Since $v\in \Bc(T,\mathfrak{c})$, one has $v=f_1\cdots f_{\delta}$, where $f_1, \ldots, f_{\delta}$ are edges of $T$. Choose $\ell\in A_v$ (which exists as $A_v\neq\emptyset$). Let $x_{\ell'}\in N_T(x_{\ell})$. If ${\rm deg}_{x_{\ell'}}(v) <2$, then $(x_{\ell}x_{\ell'})v\in (I(T)^{\delta+1})_{\mathfrak{c}}$, a contradiction. Thus, ${\rm deg}_{x_{\ell'}}(v)=2$. Set $\ell_0:=\ell'$. Suppose that $f_{i_1}, f_{i_2}$ are incident to $x_{\ell_0}$. Therefore, there are (not necessarily distinct) vertices $x_{p_1}, x_{p_2}\in V(T)$ for which $f_{i_k}=\{x_{\ell_0}, x_{p_k}\}$ for $k=1,2$. Since $x_{\ell}$ does not divide $v$, one has $x_{p_k}\neq x_{\ell}$ for $k=1,2$. If $p_1\notin A$, then set$$v':=\frac{x_{\ell}v}{x_{p_1}}=(x_{\ell}x_{\ell_0})\frac{f_1\cdots f_{\delta}}{f_{i_1}}\in \Bc(T, \mathfrak{c}).$$Since $\ell \in A$ and $p_k\notin A$, we deduce that$$\sum_{j\in A}{\rm deg}_{x_j}(v')> \sum_{j\in A}{\rm deg}_{x_j}(v)=\rho_{(T,\mathfrak{c})}(A),$$a contradiction. Therefore, $p_1\in A$. Similarly, $p_2\in A$. If ${\rm deg}_{x_{p_1}}(v)=2$, then for $v'$ defined as above, one has $|A_{v'}|< |A_v|$, a contradiction. Hence, ${\rm deg}_{x_{p_1}}(v)=1$. Similarly, ${\rm deg}_{x_{p_2}}(v)=1$ (so, $p_1, p_2$ are distinct). By assumption, at least one of $x_{p_1}$ and $x_{p_2}$, say $x_{p_1}$, is not a leaf of $T$. Set $\ell_1:=p_1$. In particular, ${\rm deg}_{x_{\ell_1}}(v)=1$. By relabeling $f_1, \ldots, f_{\delta}$ (if necessary), we assume that $f_1=\{x_{\ell_0}, x_{\ell_1}\}$. Since $x_{\ell_1}$ is not a leaf of $T$, there is $x_{\ell_2}\in N_T(x_{\ell_1})$ with $x_{\ell_2}\neq x_{\ell_0}$. If ${\rm deg}_{x_{\ell_2}}(v)<2$, then $(x_{\ell_1}x_{\ell_2})v\in (I(T)^{\delta+1})_{\mathfrak{c}}$, a contradiction. Thus, ${\rm deg}_{x_{\ell_2}}(v)=2$. Let $f_{r_1}, f_{r_2}$ be incident to $x_{\ell_2}$.
Therefore, there are (not necessarily distinct) vertices $x_{q_1}, x_{q_2}\in V(T)$ for which $f_{r_k}=\{x_{\ell_2}, x_{q_k}\}$ for $k=1,2$. Since $x_{\ell}$ does not divide $v$, one has $x_{q_k}\neq x_{\ell}$ for $k=1,2$. If $q_1\notin A$, then set$$v'':=\frac{x_{\ell}v}{x_{q_1}}=(x_{\ell}x_{\ell_0})(x_{\ell_1}x_{\ell_2})\frac{f_1\cdots f_{\delta}}{f_1f_{r_k}}\in \Bc(T, \mathfrak{c}).$$Since $\ell \in A$ and $q_1\notin A$, we conclude that$$\sum_{j\in A}{\rm deg}_{x_j}(v'')> \sum_{j\in A}{\rm deg}_{x_j}(v)=\rho_{(T,\mathfrak{c})}(A),$$a contradiction. Hence, $q_1\in A$. Similarly, $q_2\in A$. If ${\rm deg}_{x_{q_1}}(v)=2$, then for $v''$ as defined above, one has $|A_{v''}|<|A_v|$, a contradiction. So, ${\rm deg}_{x_{q_1}}(v)=1$. Similarly, ${\rm deg}_{x_{q_2}}(v)=1$ (so, $q_1, q_2$ are distinct). By assumption, at least one of $x_{q_1}$ and $x_{q_2}$, say $x_{q_1}$, is not a leaf of $T$. Set $\ell_3:=q_1$. Continuing this process yields a path $P : x_{\ell_0}, x_{\ell_1}, x_{\ell_2}, x_{\ell_3} \ldots,$ in $T$ for which 

($\ast$) ${\rm deg}_{x_{\ell_j}}(v)=2$ if $j$ is even, and ${\rm deg}_{x_{\ell_j}}(v)=1$, otherwise.

\noindent
Notice that since $T$ has no cycle, the vertices of $P$ are distinct. As $|V(T)|<\infty$, this process must stop, which completes the proof of Claim 1.

\medskip

{\bf Claim 2.} For any nonempty subset $A\subset [n]$, one has $\rho_{(T,\mathfrak{c})}(A)\geq |A|+1$.

\medskip

{\it Proof of Claim 2.} (The proof is similar to that of Claim 1.)  Claim 1 guarantees that there is $v\in \Bc(T,\mathfrak{c})$ with $\sum_{j\in A}{\rm deg}_{x_j}(v)=\rho_{(T,\mathfrak{c})}(A)$ for which $\prod_{j\in A}x_j$ divides $v$. So, one has $\rho_{(T,\mathfrak{c})}(A)\geq |A|$. To prove Claim 2, we only need to show that there is $j\in A$ for which $x_j^2$ divides $v$. As above, let $v=f_1\cdots f_{\delta}$, where $f_1, \ldots, f_{\delta}$ are edges of $T$. Choose $\ell\in A$ and suppose ${\rm deg}_{x_{\ell}}(v)=1$. Next, choose $x_{\ell'}\in N_T(x_{\ell})$ as follows. If $x_{\ell}$ is a leaf of $T$, then $x_{\ell'}$ is its unique neighbor. Otherwise, as ${\rm deg}_{x_{\ell}}(v)=1$, there is $x_{\ell'}\in N_T(x_{\ell})$ for which the edge $\{x_{\ell}, x_{\ell'}\}$ is  not equal to any of $f_1, \ldots, f_{\delta}$. If ${\rm deg}_{x_{\ell'}}(v) <2$, then $(x_{\ell}x_{\ell'})v\in (I(T)^{\delta+1})_{\mathfrak{c}}$, a contradiction. Thus, ${\rm deg}_{x_{\ell'}}(v)=2$. Set $\ell_0:=\ell'$. Suppose that $f_{i_1}, f_{i_2}$ are incident to $x_{\ell_0}$. There are (not necessarily distinct) vertices $x_{p_1}, x_{p_2}\in V(T)$ for which $f_{i_k}=\{x_{\ell_0}, x_{p_k}\}$ for $k=1,2$. If $p_1\notin A$, then set$$v':=\frac{x_{\ell}v}{x_{p_1}}=(x_{\ell}x_{\ell_0})\frac{f_1\cdots f_{\delta}}{f_{i_1}}\in \Bc(T, \mathfrak{c}).$$Since $\ell \in A$ and $p_1\notin A$, we deduce that$$\sum_{j\in A}{\rm deg}_{x_j}(v')> \sum_{j\in A}{\rm deg}_{x_j}(v)=\rho_{(T,\mathfrak{c})}(A),$$a contradiction. Hence, $p_1\in A$. Similarly, $p_2\in A$. If ${\rm deg}_{x_{p_1}}(v)=2$ or ${\rm deg}_{x_{p_2}}(v)=2$, then we are done. Assume that ${\rm deg}_{x_{p_1}}(v)=1$ and ${\rm deg}_{x_{p_2}}(v)=1$ (so, $p_1, p_2$ are distinct). At least one of $x_{p_1}$ and $x_{p_2}$ is not a leaf of $T$, say, $x_{p_1}$.  Furthermore, by the choice of $x_{\ell'}$, one has $x_{p_1}\neq x_{\ell}$. Set $\ell_1:=p_1$. By a similar argument as in the proof of Claim 1, we obtain a path $P : x_{\ell_0}, x_{\ell_1}, x_{\ell_2}, x_{\ell_3} \ldots,$ in $T$ for which 

($\ast$) ${\rm deg}_{x_{\ell_j}}(v)=2$ if $j$ is even, and ${\rm deg}_{x_{\ell_j}}(v)=1$, otherwise.

\noindent
As $|V(T)|<\infty$, this process must stop, which completes the proof of the Claim 2.

\medskip

It follows from Claim 2 and Lemma \ref{facets} that $(1, 1, \ldots,1) \in (\Pc \setminus \partial \Pc) \cap \ZZ^n$.  On the other hand, since $\mathfrak{c}=(2, \ldots, 2)$, one has $\{(1, 1, \ldots,1)\} = (\Pc \setminus \partial \Pc) \cap \ZZ^n$. Hence, $\Pc$ is pseudo-Gorenstein*, as desired.
\hspace{8cm}
\end{proof}

Lemmata \ref{tree1} and \ref{tree2} yield a classification of labeling pseudo-Gorenstein* trees.

\begin{Theorem} \label{treeclassify}
Let $T$ be a tree on $n\geq 2$ vertices.  Then the following conditions are equivalent:
\begin{itemize}
\item [(i)] $T$ is labeling pseudo-Gorenstein*.
\item [(ii)] If $x_1$ and $x_2$ are leaves
of $T$, then ${\rm dist}(x_1, x_2)\neq 2$.
\end{itemize}
\end{Theorem}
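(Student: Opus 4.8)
The plan is to deduce Theorem~\ref{treeclassify} immediately from the two lemmas that precede it, since together they establish the two implications of the equivalence. First I would observe that (i)~$\Rightarrow$~(ii) is exactly the contrapositive of Lemma~\ref{tree1}: if $T$ has two leaves $x_1,x_2$ with ${\rm dist}(x_1,x_2)=2$, then $T$ is not labeling pseudo-Gorenstein*, so a labeling pseudo-Gorenstein* tree cannot contain such a pair of leaves. Second, for (ii)~$\Rightarrow$~(i) I would invoke Lemma~\ref{tree2} directly: a tree in which every pair of leaves is at distance $\neq 2$ is labeling pseudo-Gorenstein*, witnessed by the choice $\mathfrak{c}=(2,2,\ldots,2)$, for which $\conv(\Dc(T,\mathfrak{c}))$ is pseudo-Gorenstein*.

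There is essentially no obstacle here; the substantive work has already been carried out in Lemmata~\ref{tree1} and~\ref{tree2}, and the theorem is purely the assembly of these two one-directional statements into an ``if and only if''. The only point worth a sentence of care is the edge case $n=2$: a tree on two vertices is a single edge $\{x_1,x_2\}$, both of its vertices are leaves, and ${\rm dist}(x_1,x_2)=1\neq 2$, so condition (ii) holds vacuously-for-the-relevant-pair and Lemma~\ref{tree2} (which is stated for $n\geq 2$) applies to give (i). Hence the equivalence holds in all cases $n\geq 2$.

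\begin{proof}
If $T$ has two leaves $x_1, x_2$ with ${\rm dist}(x_1, x_2)=2$, then Lemma \ref{tree1} shows that $T$ is not labeling pseudo-Gorenstein*.  Hence (i) implies (ii).  Conversely, if for any two leaves $x_1, x_2$ of $T$ one has ${\rm dist}(x_1, x_2)\neq 2$, then Lemma \ref{tree2} shows that $T$ is labeling pseudo-Gorenstein*.  Hence (ii) implies (i).
\end{proof}
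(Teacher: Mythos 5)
Your proof is correct and is exactly how the paper obtains the theorem: it is stated as an immediate consequence of Lemmata \ref{tree1} and \ref{tree2}, which supply the two implications. Nothing more is needed.
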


\begin{Corollary}
A path $P_n$ on $n \geq 2$ vertices is labeling pseudo-Gorenstein* if and only if $n \neq 3$.
\end{Corollary}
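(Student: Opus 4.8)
The plan is to read off the corollary as a direct specialization of Theorem~\ref{treeclassify}. The only combinatorial input needed is the elementary observation that, for $n \geq 2$, the path $P_n$ is a tree whose leaves are exactly its two endpoints, and that these two endpoints are at distance $n-1$ from each other; in particular, for $n = 2$ the path is a single edge whose two vertices are leaves at distance $1$.

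With this in hand I would invoke the equivalence of (i) and (ii) in Theorem~\ref{treeclassify}, applied to $T = P_n$. Condition (ii) requires that no two leaves of $T$ lie at distance $2$; since the endpoints form the only pair of leaves of $P_n$, condition (ii) holds if and only if $n - 1 \neq 2$, that is, if and only if $n \neq 3$. Therefore $P_n$ is labeling pseudo-Gorenstein* precisely when $n \neq 3$, as claimed.

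If one prefers an argument that does not quote Theorem~\ref{treeclassify} as a black box, the two directions can be unwound separately: for $n = 3$ the path $P_3$ is the star $K_{1,2}$, whose two leaves are at distance $2$, so Lemma~\ref{tree1} shows that $P_3$ is not labeling pseudo-Gorenstein*; for $n = 2$ and for $n \geq 4$ the path has no pair of leaves at distance $2$, and Lemma~\ref{tree2} produces the choice $\mathfrak{c} = (2, \ldots, 2)$ for which $\conv(\Dc(P_n,\mathfrak{c})) \subset \RR^n$ is pseudo-Gorenstein*. I do not expect any genuine obstacle here: the entire content of the corollary is the trivial fact that a path has exactly two leaves and that they are separated by $n-1$ edges, so the result is immediate once Theorem~\ref{treeclassify} is available.
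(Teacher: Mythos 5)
Your proof is correct and is exactly the specialization the paper intends: the corollary has no separate proof in the text because it follows immediately from Theorem~\ref{treeclassify} once one notes that $P_n$ has precisely two leaves, its endpoints, at distance $n-1$, so condition~(ii) fails if and only if $n=3$. Your optional unwinding through Lemmas~\ref{tree1} and~\ref{tree2} is also accurate but adds nothing beyond what the theorem already packages.
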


\section*{Appendix}
Let $\Pc \subset \RR^n$ be a lattice polytope of dimension $n$ and $i(\Pc, N) = |N\Pc \cap \ZZ^n|$ for $N = 1,2, \ldots.$.  A fundamental result by Ehrhart says that $i(\Pc, N)$ is a polynomial in $N$ of degree $n$ with $i(\Pc,0) = 1$.  A fact on generating functions guarantees that
\begin{eqnarray*}
\label{delta}
\delta(\Pc, \lambda) = (1 - \lambda)^{n+1} \left[ 1 + \sum_{N=1}^\infty i(\Pc, N) \lambda^N \right]
\end{eqnarray*}   
is a polynomial in $\lambda$ of degree $\leq n$.  Let $\delta(\Pc, \lambda) = \sum_{i=0}^{n} \delta_i \lambda^i$.  We say that $$\delta(\Pc) = (\delta_0, \delta_1, \ldots, \delta_n)$$ is the {\em $\delta$-vector} \cite{Hibi_DM} of $\Pc$. 

Now, it follows from \cite[Corollaries 2.2, 2.3]{APPS} that, when $\Pc \subset \RR^n$ is level*, then $\delta(\Pc)$ is {\em unimodal}.  More precisely, $\delta(\Pc)$ satisfies
\begin{eqnarray}
\label{unimodal}
\delta_0 \leq \delta_1 \leq \cdots \leq \delta_{[n/2]} \geq \delta_{[n/2]+1} \geq \cdots \geq \delta_n.
\end{eqnarray} 
The unimodality of $\delta$-vectors strongly encourages combinatorialists to find a natural class of level* polytopes. 
We refer the reader to \cite{APPS} for the historical background on the study of unimodal $\delta$-vectors which originated in a conjecture proposed in \cite{HibiRedBook}.   

\section*{Acknowledgments}
The second author is supported by a FAPA grant from Universidad de los Andes.

\section*{Statements and Declarations}
The authors have no Conflict of interest to declare that are relevant to the content of this article.

\section*{Data availability}
Data sharing does not apply to this article as no new data were created or analyzed in this study.

\end{document}